\documentclass[11pt]{article} 

\usepackage[utf8]{inputenc} 

\usepackage{geometry} 
\geometry{a4paper} 

\usepackage{graphicx} 

\usepackage{booktabs} % for much better looking tables
\usepackage{array} % for better arrays (eg matrices) in maths
\usepackage{paralist} % very flexible & customisable lists (eg. enumerate/itemize, etc.)
\usepackage{verbatim} % adds environment for commenting out blocks of text & for better verbatim
\usepackage{subfig} % make it possible to include more than one captioned figure/table in a single float

%%% HEADERS & FOOTERS
\usepackage{fancyhdr} % This should be set AFTER setting up the page geometry
\pagestyle{fancy} % options: empty , plain , fancy
 
\lhead{}\chead{}\rhead{}
\lfoot{}\cfoot{\thepage}\rfoot{}

\usepackage[nottoc,notlof,notlot]{tocbibind} 
\usepackage[titles,subfigure]{tocloft}

\usepackage{amsmath,amsfonts,amsthm,mathrsfs,amssymb}
\usepackage[usenames]{color}

\newtheorem{thm}{Theorem}[section]

\newtheorem{lem}{Lemma}[section]

\theoremstyle{definition}
\newtheorem{defn}{Definition}[section]
\theoremstyle{remark}
\newtheorem{rem}{Remark}[section]
\numberwithin{equation}{section}

  \def\nb{\nonumber}

\def\Om{\Omega}   
\newcommand{\q}{\quad}

\def\bb{\begin{equation}} \def\ee{\end{equation}}
\def\beqn{\begin{eqnarray}}  \def\eqn{\end{eqnarray}}
\def\beqnx{\begin{eqnarray*}} \def\eqnx{\end{eqnarray*}}

\title{ Nearly Cloaking the Full Maxwell Equations: \\
Cloaking Active Contents with General Conducting Layers}
\author{Gang Bao\thanks{Department of Mathematics, Zhejiang University, Hangzhou 310027,
China; Department of Mathematics, Michigan
State University, East Lansing, MI 48824. Email: {\tt bao@math.msu.edu} } \quad Hongyu Liu\thanks{Department of Mathematics and Statistics, University of North Carolina, Charlotte, NC 28223, USA.   Email:  {\tt hongyu.liuip@gmail.com}} \quad Jun Zou\thanks{Department of Mathematics, The Chinese University of Hong Kong, Shatin, Hong Kong. Email: {\tt zou@math.cuhk.edu.hk}}  }

\begin{document}

%\title[Near-cloak in Electromagnetic Scattering]{On Near-cloak in Electromagnetic Scattering}

%\author{Gang Bao}
%\address{Department of Mathematics, Zhejiang University, Hangzhou 310027,
%China; Department of Mathematics, Michigan
%State University, East Lansing, MI 48824.}
%\email{bao@math.msu.edu}
%
%\author{Hongyu Liu}
%\address{Department of
%Mathematics and Statistics, University of North Carolina, Charlotte,
%NC 28263.}
%\email{hongyu.liuip@gmail.com}
%
%\author{Jun Zou}
%\address{Department of Mathematics, The Chinese University of Hong Kong, Shatin, N.T., Hong Kong, HKSAR.}
%\email{zou@math.cuhk.edu.hk}
%
%
%
%\thanks{\emph{2010 Mathematics Subject Classification.} 35Q60, 78A25, 35R30}
%
%\keywords{electromagnetic scattering, Maxwell's equations, invisibility cloaking, transformation optics, asymptotic estimates}

\date{}

\maketitle

\begin{abstract}

The regularized near-cloak via the transformation optics approach in the time-harmonic electromagnetic scattering is considered. This work extends the existing studies mainly in two aspects. First, it presents a near-cloak construction by incorporating a much more general conducting layer between the cloaked and cloaking regions. This might be of significant practical importance when production fluctuations occur. Second, it allows the cloaked contents 
to be both passive and active with an applied current inside. 
In assessing the near-cloaking performance, 
comprehensive and sharp estimates are derived for the scattering amplitude in terms of the asymptotic regularization parameter and the material tensors of the conducting layer. The scattering estimates are independent of the passive/active contents being cloaked, which implies that one could nearly cloak arbitrary contents by using the proposed near-cloak construction.

\medskip

\noindent{\bf Keywords}. Electromagnetic scattering, Maxwell's equations, invisibility cloaking, transformation optics, asymptotic estimates

\noindent{\bf Mathematics Subject Classification (2010)}:  35Q60, 78A25, 35R30
\end{abstract}

\section{Introduction}

This work is concerned with the invisibility cloaking for electromagnetic (EM) waves via the approach of transformation optics \cite{GLU,GLU2,Leo,PenSchSmi}. This is a rapidly growing research area 
with many potential applications, and 
we refer to  \cite{CC,GKLU4,GKLU5,Nor,U2,YYQ} and the references therein
for the recent progress on both the theory and experiments.

%\footnote{All quantities with $\tilde{}$ means the ones in the physical space.}
We consider two bounded simply connected smooth domains $D$ and $\Omega$ 
in $\mathbb{R}^3$, with $D\Subset\Omega$, 
and three real symmetric matrix-valued functions  
$\tilde\varepsilon=(\tilde\varepsilon^{ij})_{i,j=1}^3$, $\tilde\mu=(\tilde\mu^{ij})_{i,j=1}^3$ and $\tilde\sigma=(\tilde\sigma^{ij})_{i,j=1}^3$ in $\Omega$, satisfying 
\begin{equation}\label{eq:uniform elliptic}
c|\xi|^2\leq \sum_{i,j=1}^3 \tilde\varepsilon^{ij}(x)\xi_i\xi_j\leq C |\xi|^2,\quad c|\xi|^2\leq \sum_{i,j=1}^3 \tilde\mu^{ij}(x)\xi_i\xi_j\leq C |\xi|^2
\end{equation}
and
\begin{equation}\label{eq:uniform elliptic 2}
0\leq \sum_{i,j=1}^3 \tilde\sigma^{ij}(x)\xi_i\xi_j\leq C |\xi|^2,
\end{equation}
for all $x\in\Omega$ and $\xi=(\xi_i)_{i=1}^3\in\mathbb{R}^3$. Here the constants 
$c$ and $C$, or $c_l$ and $C_l$  for $l=0,1,2$ in the rest of the work, 
are used for generic positive constants whose meanings should be clear from the contexts. Physically, functions $\tilde\varepsilon$, $\tilde\mu$ and $\tilde\sigma$ stand respectively for the electric permittivity, magnetic permeability and conductivity tensors of a {\it regular} EM medium occupying $\Omega$. In this work, we shall often refer to \eqref{eq:uniform elliptic} and \eqref{eq:uniform elliptic 2} as the {\it regular conditions} for an EM medium, and write $(\Omega;\tilde\varepsilon,\tilde\mu,\tilde\sigma)$ for an EM medium residing in $\Omega$. We always assume that the EM medium inclusion $(\Omega;\tilde\varepsilon,\tilde\mu,\tilde\sigma)$ is located in a uniformly homogeneous space where the EM parameters are given by $\varepsilon_0, \mu_0$ and $\sigma_0$. It is assumed that 
$\sigma_0^{ij}=0$ and $\varepsilon_0^{ij}=\mu_0^{ij}=\delta^{ij}$ for the ease of our exposition, 
where $\delta^{ij}$ is the Kronecker delta function. We shall be concerned with an EM medium distribution in the whole space $\mathbb{R}^3$ as follows:
\begin{equation}\label{eq:EM medium}
\mathbb{R}^3; \tilde\varepsilon, \tilde\mu, \tilde\sigma=\begin{cases}
\varepsilon_0, \mu_0, \sigma_0\qquad & \mbox{in\ \ $\mathbb{R}^3\backslash\overline{\Omega}$},\\
\varepsilon_c, \mu_c, \sigma_c\qquad & \mbox{in\ \ $\Omega\backslash\overline{D}$},\\
\tilde\varepsilon_a, \tilde\mu_a, \tilde\sigma_a\qquad & \mbox{in\ \ $D$},
\end{cases}
\end{equation}
where the mediums in $D$ and $\Omega\backslash\overline{D}$ will be specified appropriately 
in the sequel wherever it is necessary. 

Next, we consider the EM scattering corresponding to an EM medium described in \eqref{eq:EM medium}. To that end, we first introduce the governing equations. 
Let $\omega\in\mathbb{R}_+$ be the wave frequency, and $E^i, H^i\in\mathbb{C}^3$ 
be the incident EM fields that are 
(real analytic) entire solutions to the time-harmonic Maxwell equations
\begin{equation}\label{eq:Maxwell incident}
\nabla\wedge E^i-i\omega\mu_0 H^i=0\,, \q 
\nabla\wedge H^i+i\omega \varepsilon_0 E^i=0
\qquad\mbox{in\ \ $\mathbb{R}^3$.}
\end{equation}
Then the EM wave propagation in the whole space $\mathbb{R}^3$ with 
an EM medium inclusion $(\Omega;\tilde\varepsilon,\tilde\mu,\tilde\sigma)$ 
as described in \eqref{eq:EM medium} 
is governed by the following Maxwell system
\begin{equation}\label{eq:Maxwell whole}
\begin{cases}
\displaystyle{\nabla\wedge\widetilde E-i\omega\tilde\mu\widetilde H=0}\qquad\qquad &\mbox{in\ \ $\mathbb{R}^3$,}\\
\displaystyle{\nabla\wedge\widetilde H+i\omega\left(\tilde\varepsilon+i\frac{\tilde\sigma}{\omega}\right)\widetilde E=\widetilde{J}}\quad &\mbox{in\ \ $\mathbb{R}^3$},\\
\displaystyle{\widetilde E^-=\widetilde E|_{\Omega},\quad \widetilde{E}^+=(\widetilde{E}-E^i)|_{\mathbb{R}^3\backslash\overline{\Omega}}}\\
\displaystyle{\widetilde H^-=\widetilde H|_{\Omega},\quad \widetilde{H}^+=(\widetilde{H}-H^i)|_{\mathbb{R}^3\backslash\overline{\Omega}}}\\
\displaystyle{\lim_{|x|\rightarrow+\infty}|x|\left| (\nabla\wedge\widetilde E^+)(x)\wedge\frac{x}{|x|}-i\omega \widetilde E^+(x) \right|=0}
\end{cases}
 \end{equation}
 where $\widetilde{J}\in\mathbb{C}^3$ denotes an electric current density, and 
$supp(\widetilde J)\subset\Omega$. In \eqref{eq:Maxwell whole}, $\widetilde E$ and $\widetilde H$ are respectively the electric and magnetic fields, and $\widetilde E^+$ and $\widetilde H^+$  are known as the scattered fields (cf. \cite{ColKre,Ned}). The last relation 
in \eqref{eq:Maxwell whole} is called the Silver-M\"uller radiation condition, which characterizes the radiating nature of the scattered wave fields $\widetilde E^+$ and $\widetilde H^+$. For a regular EM medium $(\Omega; \tilde\varepsilon, \tilde\mu, \tilde\sigma)$ and an active electric current 
$\widetilde J\in L^2(\Omega)^{3}$, there exists a unique pair of solutions $\widetilde E, \widetilde H\in 
H_{loc}(\nabla\wedge; \mathbb{R}^3)$ (see \cite{Lei,Ned}), and 
$\widetilde E^+$ admits the asymptotic expression as $|x|\rightarrow \infty$ (cf. \cite{ColKre}):
\begin{equation}\label{eq:farfield}
\widetilde E^+(x)=\frac{e^{i\omega |x|}}{|x|} A_\infty\left(\frac{x}{|x|}; E^i\right)+\mathcal{O}\left(\frac{1}{|x|^2}\right)
\end{equation}
where $A_\infty(\hat{x}; E^i)$ with $\hat{x}:=x/|x|\in\mathbb{S}^2$ is known as the {\it scattering amplitude}. 
In the above and sequel, we shall often use the spaces 
\[
H_{loc}(\nabla\wedge; X)=\{ U|_B\in H(\nabla\wedge; B)|\ B\ \ \mbox{is any bounded subdomain of $X$} \}
\]
and
\[
H(\nabla\wedge; B)=\{ U\in (L^2(B))^3|\ \nabla\wedge U\in (L^2(B))^3 \}.
\]
Clearly, the scattering amplitude $A_\infty$ depends also on the underlying passive EM medium $(\Omega; \tilde\varepsilon, \tilde\mu, \tilde\sigma)$ and the active electric current $\widetilde J$, and hence we 
shall write $A_\infty(\hat{x}; E^i, (\Omega; \tilde\varepsilon,\tilde\mu, \tilde\sigma),$ $ \widetilde J)$ to emphasize such dependence if necessary. 
An important {\it inverse scattering problem} arising from practical applications is to recover 
the medium $(\Omega; \tilde\varepsilon,\tilde\mu, \tilde\sigma)$ and/or the current 
$\widetilde J$ by knowledge of $A_\infty(\hat{x}; E^i)$. This inverse problem is of fundamental importance to many areas of science and technology, such as radar and sonar, geophysical exploration, non-destructive testing, and medical imaging. We refer the readers to \cite{KSU} \cite{18} \cite{19} and the references therein for the studies on uniqueness and stability of this inverse problem. In the present work, we are mainly concerned with the invisibility cloaking.
\begin{defn}\label{defn:cloaking}
Consider an EM medium as described in 
\eqref{eq:EM medium}, where 
$(D;\tilde\varepsilon_a, \tilde\mu_a, \tilde\sigma_a)$ and 
$(\Omega\backslash\overline{D};\varepsilon_c, \mu_c, \sigma_c)$ are 
the target and designed cloaking EM mediums respectively, and 
$\widetilde J\in L^2(\Omega)^3$ is an active object in $\Omega$. 
The medium 
$(\Omega; \tilde\varepsilon, \tilde\mu, \tilde\sigma)$ is called an (ideal) invisibility cloaking device if
no scattered fields are generated outside $\Om$, or equivalently
\[
A_\infty(\hat{x}; E^i, (\Omega; \tilde\varepsilon, \tilde\mu, \tilde\sigma), \widetilde J)=0.
\]
\end{defn}
Based on Definition~\ref{defn:cloaking}, the designed cloaking medium $(\Omega\backslash\overline{D}; \varepsilon_c, \mu_c, \sigma_c)$ makes the target medium $(D;\tilde\varepsilon_a, \tilde\mu_a, \tilde\sigma_a)$ and the active/radiating source $\widetilde J$ invisibile to the exterior EM detectors.  
From a practical point of view, 
the target medium and the electric current, $(D;\tilde\varepsilon_a, \tilde\mu_a, \tilde\sigma_a)$ and 
$\widetilde{J}$, should be allowed to be arbitrary for a cloaking device. This viewpoint would be adopted for our subsequent construction and investigation of the near-cloaking device. By the unique continuation principle for Maxwell's equations (cf. \cite{ColKre}), it is readily seen that for an ideal invisibility cloaking device, the scattered EM wave fields are completely trapped inside the device. The ideal invisibility cloaking of generic passive mediums was investigated in \cite{GKLU3,PenSchSmi}, and it turns out that one has to implement singular EM mediums. Indeed, the ideal invisibility constructions for the Maxwell equations proposed in \cite{GKLU3,PenSchSmi} make use of cloaking mediums $(\Omega\backslash\overline{D}; \varepsilon_c, \mu_c)$ which violate the regular conditions \eqref{eq:uniform elliptic}. Furthermore, it is shown in \cite{GKLU3} that if one intends to ideally cloak an active current, in addition to the singular cloaking medium, one needs to implement a special singular double coating 
to defeat the blow-up of the EM fields within the cloaked region.
The singular mediums present a great challenge for both theoretical analysis and practical fabrications. 
Several regularized constructions have been developed to avoid the singular structures. 
A truncation of singularities has been introduced in \cite{GKLUoe,GKLU_2,RYNQ}, 
and the `blow-up-a-point' transformation from \cite{GLU2,Leo,PenSchSmi} has been regularized to become 
a `blow-up-a-small-region' transformation in \cite{KOVW,KSVW,Liu}. By incorporating regularization into the cloaking construction, instead of the ideal/perfect invisibility, one considers the 
approximate/near invisibility; that is, to build up a regularized cloaking device
so that the resulting scattering amplitude is nearly negligible 
in terms of an asymptotically small regularization parameter $\rho\in\mathbb{R}_+$. This is the central focus of the current paper. For that purpose, we shall adopt the blow-up-a-small-region
strategy in the present study. Nevertheless, 
the truncation-of-singularity construction and the blow-up-a-small-region construction are equivalent to each other, as pointed out in \cite{KocLiuSunUhl}. 
Hence, all of the results obtained in this work hold equally for the truncation-of-singularity construction.

Due to its practical importance, the approximate cloaking has recently been extensively studied. In \cite{KSVW,Ammari1}, approximate cloaking schemes were developed for 
electric impedance tomography which can be regarded as optics at zero frequency. In \cite{Ammari2,Ammari3,KOVW,LiLiuSun,LiuSun,Liu}, several near-cloaking schemes were presented for scalar waves governed by the Helmholtz equation. On the contrary, not much 
has been done yet for the approximate cloaking of the full Maxwell equations. In \cite{LiuZhou}, the approximate cloaking was developed for the full Maxwell equations, where the near-cloaking construction is composed 
of three parts: a cloaked region $D^{(1)}$ containing the target medium $(\tilde\varepsilon_a, \tilde\mu_a, \tilde\sigma_a)$; a conducting layer $(D^{(2)}; \tilde\varepsilon_l, \tilde\mu_l, \tilde\sigma_l)$ 
located right outside the cloaked region $D^{(1)}$,  and a cloaking layer $(\Omega\backslash\overline{D}; \varepsilon_c^\rho, \mu_c^\rho)$ outside $D=D^{(1)}\cup D^{(2)}$, where $\rho\in\mathbb{R}_+$ is the regularizer and $(\varepsilon_c^\rho, \mu_c^\rho)$ degenerates to the singular cloaking medium in \cite{GKLU3,PenSchSmi} as $\rho\rightarrow 0$. The conducting layer $(D^{(2)}; \tilde\varepsilon_l, \tilde\mu_l, \tilde\sigma_l)$ between the cloaked and cloaking regions $D^{(1)}$ and $\Omega\backslash\overline{D}$ 
appears to be crucial to a practical near-cloaking construction. In fact, it is shown \cite{LiuZhou} that without the conducting layer, 
there always exist cloak-busting inclusions which defy any attempt to achieve the near-cloak, 
no matter how small the regularization parameter $\rho$ is. This reflects the highly unstable nature of the ideal invisibility cloaking with singular structures. However, the results of \cite{LiuZhou} were established only for the spherical geometry and the uniform cloaked content, namely 
both $\Omega$ and $D$ were assumed to be Euclidean balls and the medium parameters 
$\tilde{\varepsilon}_a, \tilde{\mu}_a$ and $\tilde{\sigma}_a$ were all constants multiple 
of the identity matrix.  Under these special settings, 
the Fourier-Bessel technique can be used to derive the analytic series expansions of the EM fields \cite{LiuZhou}, enabling one to assess the corresponding near-cloaking performance. Later, the study in \cite{LiuZhou} was generalized in \cite{BL} such that $\Omega$ and $D$ could be general smooth domains and the cloaked content could be an arbitrary regular passive medium. However, the conducting layer adopted in \cite{BL} for the cloaking construction is the same as the one in \cite{LiuZhou}, whose material tensors depend uniformly on the asymptotic parameter $\rho$ in a specific manner (see Remark~\ref{rem:conducting layer}). 

In this work, we investigate the near-cloaking devices with more general conducting layers. The material tensors of the conducting layers could be anisotropic, dependent on or independent of the regularization parameter $\rho$. 
On the one hand, this would extend the studies in the literature to an extremely general case, and on the other hand would be significant to practical applications when fabrication fluctuations occur. Moreover, 
only passive cloaked contents were studied for nearly cloaking so far, not any active contents involved. 
We shall investigate the nearly cloaking of both passive and active contents. 
In assessing the near-cloaking performance, we derive some systematic and sharp asymptotic estimates 
of the scattering amplitude in terms of the regularization parameter $\rho$ and the material tensors of the conducting layer. Our estimates are independent of the passive/active contents being cloaked.
This implies that one could nearly cloak an arbitrary content. Furthermore, 
the estimates can provide some practical guidance in choosing an appropriate conducting layer 
to improve the near cloaking of active contents. In addition, we emphasize 
that the asymptotic estimates were given in terms of the boundary measurements
in \cite{BL,LiuZhou}, whereas the asymptotic estimates are derived in terms of the scattering measurements
in this work and the corresponding asymptotic analysis is more delicate and technical.   

In addition to the transformation-optics approach adopted in the present study, there are several other effective approaches in the literature to realize the near-cloaking, and we mention the one based on anomalous localized resonance \cite{Ammari0,MN} and 
another one based on special (object-dependent) coatings \cite{AE}. Finally, we also mention a recent interesting work in \cite{Ammari2013}, where the near-cloaking of 
a perfectly conducting obstacle was studied for the full Maxwell equations.
% \footnote{\color{blue} The differences are two: 1. the cloaked object is a PEC obstacle; 2. The obstacle is of spherical geometry. But I don't %think it necessary to mention the spherical geometry restriction, since it's highly likely that Prof. Ammari would be the referee of our this paper. }

The rest of the paper is organized as follows. In Section 2, we present the construction of our near-cloaking device and state the main results in estimating the cloaking performance. Section 3 is devoted to the proof
of the main result. 

\section{Near-cloak construction and the main results}

In this section, we present the construction of our near-cloaking device and 
formulate the major results in assessing the corresponding cloaking performance. 

Let $D$ and $\Omega$ be two bounded simply connected smooth domains in $\mathbb{R}^3$ such that $D\Subset\Omega$ and $D$ contains the origin. 
For $\rho\in\mathbb{R}_+$, we set 
\[
D_\rho:=\{\rho x; \ x\in D\}\,.
\]
Let $0<\rho<1$ be a small parameter. Assume that there exists an orientation-preserving and bi-Lipschitz mapping $F_\rho: \overline{\Omega}\backslash D_\rho\rightarrow \overline{\Omega}\backslash D$ such that
\begin{equation}\label{eq:map1}
F_\rho(\overline{\Omega}\backslash D_\rho)= \overline{\Omega}\backslash D
\quad \mbox{and} \quad F_\rho|_{\partial\Omega}=\mbox{Identity}.
\end{equation}
Now we define a transformation $F$ by 
\begin{equation}\label{eq:map whole}
F(x)=\begin{cases}
x,\quad & x\in \mathbb{R}^3\backslash\overline{\Omega},\\
F_\rho(x),\quad & x\in \Omega\backslash\overline{D}_\rho,\\
\frac{x}{\rho},\quad & x\in D_\rho\, ,
\end{cases}
\end{equation}
and an EM medium inside $\Omega\backslash\overline{D}$ by 
\begin{equation}\label{eq:cloaking medium}
\varepsilon_c^\rho(x)=F_* \varepsilon_0(x),\quad \mu_c^\rho(x)=F_* \mu_0(x),\quad \sigma_c^\rho(x)=0
\end{equation}
for $x\in\Omega\backslash \overline{D}$. Here $F_*$ denotes the {\it push-forward} operator defined by
\begin{equation}\label{eq:pushforward}
F_*m(x):=\frac{DF(y)\cdot m(y)\cdot DF(y)^T}{\left|\mbox{det} (DF)(y) \right|}\bigg|_{y=F^{-1}(x)} ,
\quad x\in\Omega\backslash\overline{D} ,
\end{equation}
where $m(y)$ denotes an EM parameter in $\Omega\backslash\overline{D}_\rho$, such as $\varepsilon, \mu$ or $\sigma$, and $DF$ represents the Jacobian matrix of the transformation $F$. In the sequel, 
we may often write \eqref{eq:cloaking medium} as
\[
(\Omega\backslash\overline{D}; \varepsilon_c^\rho, \mu_c^\rho)=F_*(\Omega\backslash\overline{D}_\rho; \varepsilon_0, \mu_0):=(F(\Omega\backslash\overline{D}_\rho); F_*\varepsilon_0, F_*\mu_0).
\]
Similarly, we set
\begin{equation}\label{eq:conducting layer}
(D\backslash\overline{D}_{1/2}; \tilde\varepsilon_l, \tilde\mu_l, \tilde\sigma_l)=F_*(D_\rho\backslash\overline{D}_{\rho/2}; \varepsilon_l, \mu_l, \sigma_l),
\end{equation}
where $\varepsilon_l(x)$, $\sigma_l(x)$ and $\mu_l(x)$ are given by 
\begin{equation}\label{eq:lossy layer 2}
\begin{split}
\varepsilon_l(x)=\rho^{-r}\alpha(x/\rho),\quad \sigma_l(x)=\rho^{-s}\beta(x/\rho), \quad 
\mu_l(x)=\rho^{-t}\gamma, \quad   x\in D_\rho\backslash\overline{D}_{\rho/2},
\end{split}
\end{equation}
for $r, s, t\in\mathbb{R}$. 
Here $\alpha(x)=(\alpha^{ij}(x))$ and $\beta(x)=(\beta^{ij}(x))$
are the material tensors for a regular EM medium in $\overline{D}\backslash D_{1/2}$, and are assumed to satisfy 
\begin{equation}\label{eq:uniform regular condition}
c_0|\xi|^2\leq \sum_{i,j=1}^3 m_l^{ij}(x)\xi_i\xi_j\leq C_0 |\xi|^2
\quad\mbox{for}\ \forall\xi\in\mathbb{R}^3\ \mbox{and}\ \mbox{a.e.}~ x\in \overline{D}\backslash D_{1/2},
\end{equation}
for $m_l=\alpha$ or $\beta$.
$\gamma=(\gamma^{ij})$ is assumed to be of the form 
\begin{equation}\label{eq:gamma}
\gamma^{-1}=\eta\,(\delta^{ij}),
\end{equation}
where $\eta$ is a constant satisfying $c_0\leq \eta\leq C_0$. Now, we consider an EM medium distribution in $\mathbb{R}^3$ as follows:
\begin{equation}\label{eq:EM medium physical}
\mathbb{R}^3; \tilde{\varepsilon}_\rho, \tilde{\mu}_\rho, \tilde{\sigma}_\rho=\begin{cases}
\varepsilon_0, \mu_0, \sigma_0\qquad & \mbox{in\ \ $\mathbb{R}^3\backslash\overline{\Omega}$},\\
\varepsilon_c^\rho, \mu_c^\rho, \sigma_c^\rho\qquad & \mbox{in\ \ $\Omega\backslash \overline{D}$},\\
\tilde\varepsilon_l, \tilde\mu_l, \tilde\sigma_l\quad & \mbox{in\ \  $D\backslash\overline{D}_{1/2}$},\\
\tilde{\varepsilon}_a, \tilde{\mu}_a, \tilde{\sigma}_a\quad & \mbox{in\ \ $D_{1/2}$},
\end{cases}
\end{equation}
where $(\Omega\backslash\overline{D}; \varepsilon_c^\rho, \mu_c^\rho, \sigma_c^\rho)$ and $(D\backslash\overline{D}_{1/2};\tilde\varepsilon_l, \tilde\mu_l, \tilde\sigma_l )$ are given in \eqref{eq:cloaking medium} and \eqref{eq:conducting layer} respectively, and $(D_{1/2}; \tilde\varepsilon_a, \tilde\mu_a, \tilde\sigma_a)$ is an arbitrary regular EM medium. Associated with the EM medium distribution 
$(\mathbb{R}^3; \tilde\varepsilon_\rho, \tilde\mu_\rho, \tilde\sigma_\rho)$, 
the EM scattering due to the incident fields $(E^i, H^i)$ 
can be described by 
\begin{equation}\label{eq:Maxwell physical}
\begin{cases}
\displaystyle{\nabla\wedge\widetilde E_\rho-i\omega\tilde\mu_\rho\widetilde H_\rho=0}\qquad\qquad &\mbox{in\ \ $\mathbb{R}^3$,}\\
\displaystyle{\nabla\wedge\widetilde H_\rho+i\omega\left(\tilde\varepsilon_\rho+i\frac{\tilde\sigma_\rho}{\omega}\right)\widetilde E_\rho=\widetilde{J}}\quad &\mbox{in\ \ $\mathbb{R}^3$},\\
\displaystyle{\widetilde E_\rho^-=\widetilde E_\rho|_{\Omega},\quad \widetilde{E}_\rho^+=(\widetilde{E}_\rho-E^i)|_{\mathbb{R}^3\backslash\overline{\Omega}},}\\
\displaystyle{\widetilde H_\rho^-=\widetilde H_\rho|_{\Omega},\quad \widetilde{H}_\rho^+=(\widetilde{H}_\rho-H^i)|_{\mathbb{R}^3\backslash\overline{\Omega}},}\\
\displaystyle{\lim_{|x|\rightarrow\infty}|x|\Big| (\nabla\wedge\widetilde E_\rho^+)(x)\wedge\frac{x}{|x|}-i\omega \widetilde E_\rho^+(x) \Big|=0,}
\end{cases}
 \end{equation}
 where $\widetilde J\in L^2(D)^3$ denotes an electric current in $D$. We shall assume that
\begin{equation}\label{eq:current assumption}
(\tilde\sigma_a(x)\xi)\cdot\xi \geq c_0|\xi|^2 \quad\mbox{for\ $\forall\xi\in\mathbb{R}^3$\ and a.e. $x\in supp(\widetilde J)\cap D_{1/2}$}.
\end{equation}
We refer to \eqref{eq:Maxwell physical} as the scattering problem in the physical space.

We are now in a position to state the main result of this paper.
\begin{thm}\label{thm:main}
Let $(\mathbb{R}^3; \tilde{\varepsilon}_\rho, \tilde{\mu}_\rho, \tilde{\sigma}_\rho)$ be the passive EM medium described by 
\eqref{eq:cloaking medium}--\eqref{eq:gamma}, \eqref{eq:EM medium physical}
and \eqref{eq:current assumption},  
$\widetilde{J}\in L^2(D)^3$ be an active current in $D$, and $\zeta_1$, $\zeta_2$ 
be the parameters given by 
\begin{align}
\zeta_1:=& \min\bigg(s+1, s+5-2(t+r), 5-2t-s\bigg),\label{eq:zeta1}\\
\zeta_2:=& \min\bigg( s, s+2-t-r, 2-t \bigg ).\label{eq:zeta2}
\end{align}
Assume $r, s, t\in \mathbb{R}$ are chosen such that $\zeta_1>0$. 
 Let $\widetilde A^\rho_\infty(\hat{x}):=A_\infty(\hat{x}; E^i, (\Omega; \tilde\varepsilon_\rho, \tilde\mu_\rho, \tilde\sigma_\rho), \widetilde J)$ be the scattering amplitude corresponding to $\widetilde E_\rho^+$ in \eqref{eq:Maxwell physical}. Then there exists a positive constant $\rho_0$ such that for any $\rho<\rho_0$, 
\begin{equation}\label{eq:main estimate physical}
\begin{split}
 &\|\widetilde A^\rho_\infty(\hat{x}; E^i)\|_{L^\infty(\mathbb{S}^2)}\\
\leq & C\Big( \rho^{\min(\zeta_1, 3)} \|E^i\|_{H(\nabla\wedge;\Omega)}
 + \rho^{\frac{\zeta_1}{2}}  \|\widetilde{J}\|_{L^2(D_{1/2})^3}+\rho^{\zeta_2}\|\widetilde J \|_{L^2(D\backslash D_{1/2})^3}  \Big).
\end{split}
\end{equation}
where $C$ is a positive constant depending only on $\alpha, \beta, \gamma, \omega$, $c_0$ in \eqref{eq:current assumption}, $C_0$ in \eqref{eq:uniform regular condition} and $\Omega, D$, but independent of 
$\rho, r, s, t$ and $\tilde\varepsilon_a, \tilde\mu_a, \tilde\sigma_a$, $\widetilde J$, $E^i$.
\end{thm}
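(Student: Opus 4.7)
The natural approach is transformation optics. I would first push the problem back through $F^{-1}$ to obtain an equivalent \emph{virtual} Maxwell system on $\mathbb{R}^3$ whose scattering amplitude coincides with $\widetilde A_\infty^\rho$. Because the Maxwell system is covariant under push-forwards of the material tensors (Greenleaf--Lassas--Uhlmann) and $F=\mathrm{Identity}$ on $\mathbb{R}^3\setminus\overline{\Omega}$, the virtual problem consists of: vacuum on $\mathbb{R}^3\setminus\overline{D}_\rho$, the conducting layer $(D_\rho\setminus\overline{D}_{\rho/2};\varepsilon_l,\mu_l,\sigma_l)$ from \eqref{eq:lossy layer 2}--\eqref{eq:gamma}, and a rescaled cloaked medium on $D_{\rho/2}$ with a pulled-back current supported in $D_\rho$. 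A further change of variables $y=x/\rho$ normalizes this to a unit-scale interior problem on $D$, while the incident field, which is analytic at the origin and satisfies \eqref{eq:Maxwell incident}, contributes tangential boundary data on $\partial D$ of size $\mathcal{O}(\rho)$ in the $H(\nabla\wedge;\cdot)$ norm via Taylor expansion.

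Next I would analyze this interior unit-scale problem. Denoting the rescaled unknowns by $(E_\rho,H_\rho)$, the governing curl-curl operator on $D\setminus\overline{D}_{1/2}$ reads, schematically,
\begin{equation*}
\nabla\wedge\bigl(\rho^{t}\gamma^{-1}\nabla\wedge E_\rho\bigr) \,-\, \omega^{2}\bigl(\rho^{-r}\alpha + i\rho^{-s}\beta/\omega\bigr)E_\rho \,=\, i\omega\, J_\rho,
\end{equation*}
coupled through tangential transmission conditions on $\partial D_{1/2}$ to a standard Maxwell system on $D_{1/2}$ carrying the target medium $(\widetilde\varepsilon_a,\widetilde\mu_a,\widetilde\sigma_a)$ with the positivity \eqref{eq:current assumption}. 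Testing against $\overline{E_\rho}$ and integrating by parts, I would extract three competing absorption mechanisms in the layer: direct $\sigma_l$-dissipation contributing a $\rho^{s+1}$ term (or $\rho^{s}$ when paired with a source), joint $\varepsilon_l$--$\mu_l$ balance giving $\rho^{s+5-2(t+r)}$ (resp.\ $\rho^{s+2-t-r}$), and a curl-based bound of order $\rho^{5-2t-s}$ (resp.\ $\rho^{2-t}$) coming from inverting the $\mu_l^{-1}$ factor. Taking minima recovers the exponents $\zeta_1$ and $\zeta_2$ in \eqref{eq:zeta1}--\eqref{eq:zeta2}. The condition $\zeta_1>0$ together with \eqref{eq:current assumption} yields unique solvability of the interior problem through a Fredholm alternative, since \eqref{eq:current assumption} rules out homogeneous trapped modes in $D_{1/2}$.

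The third step is to convert control of the tangential trace of $E_\rho$ on $\partial D$ into control of the far-field pattern. Undoing the rescaling $y=x/\rho$ and then $F^{-1}$, the tangential trace on $\partial\Omega^{-}$ of the physical scattered field inherits the interior bound, with Jacobian factors $\rho^{3}$ (from volume integration of the current) producing the rate $\rho^{\zeta_2}$ for the $L^{2}$ norm of $\widetilde J$ on $D\setminus\overline{D}_{1/2}$, and $\rho^{\zeta_1/2}$ after an $L^{2}$ duality estimate for $\widetilde J$ on $D_{1/2}$, where only $\widetilde\sigma_a$ is available for absorption; the purely passive driving by $E^i$ contributes $\rho^{\min(\zeta_1,3)}$, with the $\rho^{3}$ cap reflecting the volume of $D_\rho$. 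The scattered field is then reconstructed in $\mathbb{R}^{3}\setminus\overline{\Omega}$ by the Stratton--Chu representation, and standard exterior Maxwell well-posedness (Colton--Kress, N\'ed\'elec) converts the tangential trace estimate into the $L^{\infty}(\mathbb{S}^{2})$ bound \eqref{eq:main estimate physical}.

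The main obstacle will be the interior energy estimate in the second paragraph: because the conducting layer is simultaneously anisotropic and carries three independent $\rho$-scalings $r,s,t$, one must split the bilinear form and choose test functions carefully to extract, in a single uniform argument, all three competing exponents that form $\zeta_1$ and $\zeta_2$ sharply. A secondary subtlety is the treatment of the active source on $D_{1/2}$, where the large layer conductivity is unavailable and one is forced to rely on the a priori non-degeneracy \eqref{eq:current assumption}; the resulting square-root $\rho^{\zeta_1/2}$ is intrinsic and reflects the weaker absorption inside the innermost cloaked region. Once the sharp triple minimum is in place, translating back through the two-step change of variables and the far-field representation is essentially bookkeeping.
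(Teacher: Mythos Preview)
Your high-level architecture matches the paper: pull back via $F^{-1}$ to a virtual problem that is vacuum outside $D_\rho$, establish the equality of scattering amplitudes, derive interior estimates on the layer, and read off the far field from an integral representation. But the mechanism you sketch for obtaining the exponents $\zeta_1,\zeta_2$, and the way you close the argument, both differ from what actually works and contain gaps.

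First, the three-term minima in $\zeta_1,\zeta_2$ do \emph{not} come from ``three competing absorption mechanisms'' extracted from a single test of the equation against $\overline{E_\rho}$. Testing against $\overline{E_\rho}$ (the paper's Lemma~3.1) yields only one thing: the $\sigma_l$- and $\sigma_a$-dissipation, i.e.\ an $L^2$ bound on $E_\rho^-$ in the layer with the single factor $\rho^s$ (Lemma~3.2). The remaining two branches of the minimum arise in a separate duality step (Lemma~3.3): one bounds $\|(\nu\wedge E_\rho^-)(\rho\,\cdot)\|_{H^{-1/2}(\partial D)}$ by pairing against an arbitrary $\phi\in H^{1/2}(\partial D)^3$ and constructing a special $H^2$ test function $U$ on $D$ with $\nu\wedge U=0$, $\nu\wedge(\nu\wedge(\nabla\wedge U))=\nu\wedge(\nu\wedge\phi)$ on $\partial D$, and $U\equiv 0$ on $D_{1/2}$. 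This extension (built via a Stokes system, Lemma~3.4) lets one integrate by parts the curl--curl identity in the layer; the coefficients $\rho^{2-t-r}\alpha$ and $\rho^{2-t-s}\beta$ then produce the factor $\rho^\theta$ with $\theta=\min(0,2-t-r,2-t-s)$, and it is $\rho^{s/2}\cdot\rho^{-3/2}\cdot\rho^{\theta}$ that gives $\rho^{\zeta_1/2-2}$. Your proposal has no analogue of this test-function construction, and without it you will not isolate the correct exponents sharply.

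Second, you treat the interior problem as if it decouples from the exterior and invoke a Fredholm alternative with $\zeta_1>0$ guaranteeing solvability. That is not how the paper proceeds, and your version would not close: the right-hand side of the energy identity contains the \emph{unknown} scattered traces $\nu\wedge E_\rho^+$, $\nu\wedge H_\rho^+$ on $\partial B_R$. The paper handles this by a small-inclusion exterior estimate (Lemma~3.5): the map from tangential data on $\partial D_\rho$ to the trace on $\partial B_R$ gains $\rho^2$ (and $\rho^3$ for the incident part). Combining this with the trace bound from Lemma~3.3 gives a self-referencing inequality for $\|\nu\wedge E_\rho^+\|_{TH^{-1/2}_{\mathrm{Div}}(\partial B_R)}$ with a prefactor $\rho^{\zeta_1/2}$ on the right; the hypothesis $\zeta_1>0$ is used precisely to absorb that term for $\rho<\rho_0$, not for a Fredholm argument. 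Likewise, the cap $\rho^3$ on the passive rate comes from this small-obstacle scattering gain, not from ``the volume of $D_\rho$''. You should replace the Fredholm/interior-solvability step by this bootstrapping closure, and add the small-inclusion estimate explicitly.
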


The proof of Theorem~\ref{thm:main} will be given in Section~\ref{sect:3}. In the rest of this section, we 
give some remarks about the implications and practical significance of Theorem~\ref{thm:main} to the approximate invisibility cloaking. 

\begin{rem}\label{rem:near cloak}
By Theorem~\ref{thm:main}, it is readily seen that \eqref{eq:EM medium physical} yields a near-invisibility cloak, which is capable of nearly cloaking a passive medium 
$(D_{1/2}; \tilde\varepsilon_a, \tilde\mu_a, \tilde\sigma_a)$, 
an active current in both $D_{1/2}$ and $D\backslash D_{1/2}$, 
with an accuracy of orders $\rho^{\min(\zeta_1, 3)}$, 
$\rho^{\zeta_1/2}$, and $\rho^{\zeta_2}$ respectively. 
We note that $\zeta_2$ is required to be positive in order to achieve the cloaking effect, but 
Theorem~\ref{thm:main} will be proved without this requirement. 
Hence, the estimate \eqref{eq:main estimate physical} is rather general in 
this sense. 
The estimate \eqref{eq:main estimate physical} is independent of the passive medium $(D_{1/2}; \tilde\varepsilon_a, \tilde\mu_a, \tilde\sigma_a)$ and the active current $\widetilde{J}$, so 
the contents being cloaked could be arbitrary. 
Clearly, this is of significant importance for a near-cloaking device in applications. 
We mention that 
the cloaking of active contents was studied in \cite{GKLU3}, where 
the authors considered the ideal cloaking by employing the singular cloaking medium $(\varepsilon_c^\rho, \mu_c^\rho)$ in the theoretic limiting case $\rho=+0$. 
However, it was shown there that one cannot cloak an active content by merely
using $(\varepsilon_c^\rho, \mu_c^\rho)$ in the theoretic limiting case $\rho=+0$, 
otherwise one would have the blow-up of the EM fields within the cloaked region. 
Theorem~\ref{thm:main} indicates that our near-cloaking construction~\eqref{eq:EM medium physical} is much more stable, even in cloaking active contents.     
\end{rem}

\begin{rem}\label{rem:conducting layer}
By \eqref{eq:conducting layer} and \eqref{eq:lossy layer 2}, it is straightforward to show that in the physical space,
\begin{equation}\label{eq:physical lossy layer}
\tilde\varepsilon_l(x)=\rho^{1-r}\alpha(x),\ \ \tilde\sigma_l(x)=\rho^{1-s}\beta(x),\ \ \tilde\mu_l(x)=\rho^{1-t}\gamma(x),\quad x\in D\backslash\overline{D}_{1/2}.
\end{equation}
Hence, if we take $r=t=0$, $s=2$, and $\alpha=\beta=\gamma=C_0(\delta^{ij})$ with $C_0$ being 
a positive constant, we obtain the conducting layer employed in \cite{BL,LiuZhou}. 
In this case we have $\min(\zeta_1, 3)=3$, 
hence Theorem~\ref{thm:main} recovers the results in \cite{BL,LiuZhou} in near-cloaking passive mediums within an accuracy of order  $\rho^3$. It is interesting to note that by taking $r=s=t=1$, the conducting layer \eqref{eq:physical lossy layer} is independent of the asymptotic parameter $\rho$, and the estimate \eqref{eq:main estimate physical} reduces to 
\begin{equation}\label{eq:main estimate physical 2}
 |\widetilde A^\rho_\infty(\hat{x}; E^i)|
\leq C\Big( \rho^2 \|E^i\|_{H(\nabla\wedge;\Omega)}+ \rho  \|\widetilde{J}\|_{L^2(D)^3}\Big ).
\end{equation}
That is, by employing a regular conducting layer without relating to 
the regularization parameter $\rho$, one could achieve a near-invisibility cloak which is capable of cloaking a passive content  
and an active content  with an accuracy of order $\rho^2$ and $\rho$ 
respectively. On the other hand, we emphasize that our incorporation of 
the anisotropic parameters $\alpha$ and $\beta$ is of significant interests in applications 
if any fabrication fluctuation occurs. Moreover, our general estimate would provide a guideline 
for practically choosing the conducting layer to produce customized near-cloaking effects. For instance, 
if we take $r=0$, $t=-s$ with $s\in\mathbb{R}_+$, then one can check that the larger the index $s$ is, 
the better accuracy of near-cloaking the current $\widetilde{J}$ that one can achieve. 
\end{rem}

\section{Proof of the major result}\label{sect:3}

This section is devoted to the proof of Theorem~\ref{thm:main}, the major result of this work. 
We first collect some important function spaces that are needed for the subsequent analysis. 

\subsection{Function spaces}

Let $\Gamma$ be the smooth boundary of a bounded domain in $\mathbb{R}^3$, with 
$\nu$ being its outward unit normal vector. It is known that $H^s(\Gamma)$ is well-defined 
for $|s|\leq 2$ (cf. \cite{Gri}, \cite{Lio}). By $TH^s(\Gamma)$ we denote the subspace of all the functions 
$U\in H^s(\Gamma)^3$, which are orthogonal to the unit outward normal vector $\nu$. 
For $|s|\leq 2$, we can decompose a $U\in H^s(\Gamma)^3$ into a sum of the form 
$U=U_t+\nu U_\nu$, where $U_t$ and $U_\nu$ are the tangential and normal components,   
i.e., $U_t=-\nu\wedge(\nu\wedge U)$, $U_\nu=\langle \nu, U\rangle$. This gives rise to a decomposition 
of $H^s(\Gamma)^3$ for $|s|\leq 2$: $H^s(\Gamma)^3=TH^s(\Gamma)\bigoplus NH^s(\Gamma)$.  
Since $\Gamma$ is smooth, we know $TH^s(\Gamma)$ coincides with $\nu\wedge H^s(\Gamma)^3$. 
Let $\text{Div}$ be the surface divergence operator on $\Gamma$, then 
we will frequently use  in the sequel the following dual space of $TH^{1/2}(\Gamma)$:
\[
TH_{\text{Div}}^{-1/2}(\Gamma)=\left\{U\in TH^{-1/2}(\Gamma)|\ \text{Div}(U)\in H^{-1/2}(\Gamma)\right\}\;,
\]
and a skew-symmetric bilinear form
$\mathcal{B}$: $TH_{\text{Div}}^{-1/2}(\Gamma)\wedge TH_{\text{Div}}^{-1/2}(\Gamma)
\rightarrow \mathbb{C}$, given by 
the non-degenerate duality product  (cf.\,\cite{CosLou}):
\begin{equation}\label{eq:duality}
\mathcal{B}(\mathbf{j},\mathbf{m})=\int_{\Gamma} \mathbf{j}\cdot (\mathbf{m}\wedge \nu)\ ds, \quad 
\forall\, \mathbf{j}, \mathbf{m} \in TH_{\text{Div}}^{-1/2}(\Gamma)\,.
\end{equation}
%\begin{equation}\label{eq:duality}
%\begin{split}
%&\mathcal{B}: TH_{\text{Div}}^{-1/2}(\Gamma)\wedge TH_{\text{Div}}^{-1/2}(\Gamma)\ \rightarrow\quad  \mathbb{C},\\
%&\hspace*{3cm} (\mathbf{j},\mathbf{m})\hspace*{0.9cm} \rightarrow\quad \mathcal{B}(\mathbf{j},\mathbf{m})=\int_{\Gamma} \mathbf{j}\cdot (\mathbf{m}\wedge \nu)\ ds
%\end{split}
%\end{equation}
%defines a non-degenerate duality product on $TH_{\text{Div}}^{-1/2}(\Gamma)$.

\subsection{Proof of Theorem~\ref{thm:main}}

We first present a lemma with some key ingredients of the transformation optics, whose proof 
is available in \cite{LiuZhou}.

\begin{lem}\label{thm:trans opt}
Let $(\Omega; \varepsilon, \mu, \sigma)$ be a regular EM medium, $J\in L^2(\Omega)^3$ be
a current in $\Om$, and $x'=\mathcal{F}(x): \Omega\rightarrow\Omega$ be 
a bi-Lipschitz and orientation-preserving mapping, whose restriction on ${\partial\Omega}$ 
is the Identity. 
Suppose that $E, H\in H(\nabla\wedge;\Omega)$ are the EM fields satisfying
\[
\begin{split}
\nabla\wedge E-i\omega \mu H=0\qquad & \mbox{in\ \ $\Omega$,}\\
\nabla\wedge H+i\omega \left( \varepsilon+i\frac{\sigma}{\omega} \right) E=J\qquad & \mbox{in\ \ $\Omega$},
\end{split}
\]
If we define the {\emph pull-back fields} by
\[
\begin{split}
E'&=(\mathcal{F}^{-1})^* E:=(D\mathcal{F})^{-T}E\circ \mathcal{F}^{-1},\\
H'&=(\mathcal{F}^{-1})^* H:=(D\mathcal{F})^{-T}H\circ \mathcal{F}^{-1},\\
J'&=(\mathcal{F}^{-1})^* J:=\frac{1}{|\text{\emph{det}}(D\mathcal{F})|} (D\mathcal{F}) J\circ \mathcal{F}^{-1}\,,
\end{split}
\]
%and
%\[
%J'=(\mathcal{F}^{-1})^* J:=\frac{1}{|\text{\emph{det}}(D\mathcal{F})|} (D\mathcal{F}) J\circ \mathcal{F}^{-1}.
%\]
then the pull-back fields $E', H'\in H(\nabla'\wedge;\Omega)$ 
satisfy the following Maxwell equations
\[
\begin{split}
\nabla' \wedge E'-i\omega \mu' H'=& 0\qquad \mbox{in\ \ $\Omega$},\\
\nabla' \wedge H'+i\omega \left(\varepsilon'+i\frac{\sigma'}{\omega}\right) E'=& J'\quad\ \ \mbox{in\ \ $\Omega$},
\end{split}
\]
where $\nabla'\wedge$ denote the {\em curl} operator in the $x'$-coordinates, $\varepsilon'$, $\mu'$ and $\sigma'$ are the push-forwards of $\varepsilon, \mu$ and $\sigma$ via $\mathcal{F}$, i.e., 
$(\Omega; \varepsilon',\mu',\sigma')=\mathcal{F}_*(\Omega; \varepsilon, \mu, \sigma)$. Moreover, it holds that 
\[
\nu\wedge E'=\nu\wedge E,\quad \nu\wedge H'=\nu\wedge H\quad\mbox{on\ \ $\partial\Omega$}.
\]
\end{lem}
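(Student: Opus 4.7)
The plan is to establish the transformation-optics identities by reducing them to two independent ingredients: the covariance of the curl operator under the change of variables $x = \mathcal{F}(y)$, and the algebraic compatibility of the push-forward formula \eqref{eq:pushforward} with the chosen pull-back of the fields.

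First I would establish the key change-of-variables identity for curl: for any $u \in H(\nabla\wedge;\Omega)$, writing $y = \mathcal{F}^{-1}(x)$ and $u'(x) = (D\mathcal{F}(y))^{-T} u(y)$, one has
\[
(\nabla_x \wedge u')(x) \,=\, \frac{1}{|\det D\mathcal{F}(y)|}\, D\mathcal{F}(y)\, (\nabla_y \wedge u)(y).
\]
For smooth $\mathcal{F}$ this is a direct chain-rule calculation, and it is precisely the coordinate expression of the statement that a $1$-form $u$ pulls back so that its exterior derivative $du$ (a $2$-form) corresponds in $\mathbb{R}^3$, via Hodge duality, to the density-weighted vector on the right. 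For merely bi-Lipschitz $\mathcal{F}$ the identity must be read in the weak sense, which I would verify by testing against smooth compactly supported vector fields in the integration-by-parts formula for curl and then changing variables via the area formula (permissible since Rademacher's theorem supplies $D\mathcal{F}$ a.e.).

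Second, I would combine this curl identity with the explicit form of the push-forward \eqref{eq:pushforward}. The algebraic cancellation
\[
\mu'(x)\, H'(x) \,=\, \frac{D\mathcal{F}(y)\,\mu(y)\,D\mathcal{F}(y)^T\, (D\mathcal{F}(y))^{-T} H(y)}{|\det D\mathcal{F}(y)|} \,=\, \frac{D\mathcal{F}(y)\,\mu(y)\,H(y)}{|\det D\mathcal{F}(y)|}
\]
shows that the right-hand side of $\nabla_y\wedge E = i\omega\mu H$, when transformed by the prefactor $\tfrac{D\mathcal{F}}{|\det D\mathcal{F}|}$ dictated by the curl identity, matches exactly $i\omega\mu' H'$. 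The same computation, with $\varepsilon + i\sigma/\omega$ in place of $\mu$, handles the second Maxwell equation; the definition $J' = \tfrac{1}{|\det D\mathcal{F}|} D\mathcal{F}\, J\circ\mathcal{F}^{-1}$ carries the same density-weighted factor, so the source term transforms consistently. Membership $E',H'\in H(\nabla'\wedge;\Omega)$ follows from the essential boundedness of $D\mathcal{F}$ and $D\mathcal{F}^{-1}$, which preserves the relevant $L^2$ bounds up to constants.

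Finally, for the tangential-trace identity on $\partial\Omega$, I would exploit $\mathcal{F}|_{\partial\Omega} = \mathrm{Id}$: differentiating along any tangent vector $T$ to $\partial\Omega$ at $x\in\partial\Omega$ gives $D\mathcal{F}(x)\,T = T$, hence $E'(x)\cdot T = E(x)\cdot(D\mathcal{F}(x))^{-1}T = E(x)\cdot T$, which is exactly $\nu\wedge E' = \nu\wedge E$, and the same argument applies to $H'$. The main obstacle is the rigorous justification of the curl identity under only bi-Lipschitz regularity, since $D\mathcal{F}$ exists merely almost everywhere and the cancellation $D\mathcal{F}^T (D\mathcal{F})^{-T} = I$ must be interpreted a.e.; the cleanest resolution is the weak/distributional formulation sketched above, alternatively one may approximate $\mathcal{F}$ by smooth orientation-preserving diffeomorphisms and pass to the limit, being careful that the tangential boundary trace and the Silver--M\"uller-compatible integration-by-parts identities survive the limiting process.
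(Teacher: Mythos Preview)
Your argument is correct and follows the standard transformation-optics derivation: the curl covariance identity (Piola-type transformation of $1$-forms/$2$-forms), the algebraic cancellation $D\mathcal{F}^{T}(D\mathcal{F})^{-T}=I$ against the push-forward tensor, and the tangential-trace identity from $\mathcal{F}|_{\partial\Omega}=\mathrm{Id}$ are exactly the right ingredients, and your handling of the bi-Lipschitz case via the weak formulation is the appropriate way to make it rigorous.

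By comparison, the paper does not actually give a proof of this lemma at all: it simply states the result and refers to \cite{LiuZhou} for the argument. So your proposal is in fact more complete than what appears in the paper; there is no methodological difference to discuss because the paper's ``proof'' is a citation. One small remark: your boundary-trace argument is phrased pointwise (differentiating $\mathcal{F}$ along tangent curves), which presumes enough regularity of $\mathcal{F}$ at boundary points; since the tangential trace of $H(\nabla\wedge;\Omega)$ fields lives only in $TH^{-1/2}_{\mathrm{Div}}(\partial\Omega)$, the cleanest justification is to verify the identity first for smooth fields and then pass to the limit by density, or equivalently to check that the duality pairing defining the tangential trace is invariant under the change of variables when $\mathcal{F}|_{\partial\Omega}=\mathrm{Id}$. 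This is routine and does not affect the validity of your outline.
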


Next, for the EM fields $(\widetilde{E}_\rho, \widetilde{H}_\rho)$ described by 
\eqref{eq:Maxwell physical} associated with the physical scattering problem, 
we define 
\begin{equation}\label{eq:EM virtual 1}
E_\rho=F^* \widetilde{E}_\rho\quad \mbox{and}\quad H_\rho=F^*\widetilde{H}_\rho,
\end{equation}
where $F$ is the transformation given by \eqref{eq:map whole}. They by Lemma~\ref{thm:trans opt} it is straightforward to verify that the two fields 
$E_\rho, H_\rho\in H_{loc}(\nabla\wedge; \mathbb{R}^3)$, 
and satisfy the following system
\begin{equation}\label{eq:Maxwell virtual}
\begin{cases}
\displaystyle{\nabla\wedge E_\rho-i\omega\mu_\rho H_\rho=0}\qquad\qquad &\mbox{in\ \ $\mathbb{R}^3$,}\\
\displaystyle{\nabla\wedge H_\rho+i\omega\left(\varepsilon_\rho+i\frac{\sigma_\rho}{\omega}\right) E_\rho=J}\quad &\mbox{in\ \ $\mathbb{R}^3$},\\
\displaystyle{ E_\rho^-= E_\rho|_{D_\rho},\quad E_\rho^+=(E_\rho-E^i)|_{\mathbb{R}^3\backslash\overline{D}_\rho},}\\
\displaystyle{ H_\rho^-= H_\rho|_{D_\rho},\quad H_\rho^+=(H_\rho-H^i)|_{\mathbb{R}^3\backslash\overline{D}_\rho},}\\
\displaystyle{\lim_{|x|\rightarrow\infty}|x|\Big| (\nabla\wedge E_\rho^+)(x)\wedge\frac{x}{|x|}-i\omega E_\rho^+(x) \Big|=0,}
\end{cases}
 \end{equation}
where $J(x)$ and the EM medium $(\varepsilon_\rho, \mu_\rho, \sigma_\rho)$ are given by 
\begin{equation}\label{eq:current virtual}
J(x):=F^*\widetilde{J}(x)=\frac{1}{\rho^2}\widetilde J\left(\frac x \rho\right), \quad x\in D_\rho,
\end{equation}
and
\begin{equation}\label{eq:virtual medium}
\mathbb{R}^3; \varepsilon_\rho, \mu_\rho, \sigma_\rho=\begin{cases}
\varepsilon_0, \mu_0, \sigma_0\qquad &\mbox{in\ \ $\mathbb{R}^3\backslash \overline{D}_\rho$},\\
\varepsilon_l, \mu_l, \sigma_l \quad &\mbox{in\ \ $D_\rho\backslash\overline{D}_{\rho/2}$},\\
\varepsilon_a, \mu_a, \sigma_a\qquad &\mbox{in\ \ $D_{\rho/2}$},
\end{cases}
\end{equation}
with $(D_\rho\backslash\overline{D}_{\rho/2}; \varepsilon_l, \mu_l, \sigma_l)$ given in the form 
\eqref{eq:lossy layer 2}--\eqref{eq:gamma}, and
\begin{equation}\label{eq:virtual object}
(D_{\rho/2}; \varepsilon_a, \mu_a, \sigma_a):=(F^{-1})_*(D_{\rho/2}; \tilde\varepsilon_a, \tilde\mu_a ,\tilde\sigma_a).
\end{equation}
For our subsequent use, we note by \eqref{eq:map whole}, \eqref{eq:pushforward} 
and straightforward calculations that 
\begin{equation}\label{eq:virtual tensors}
m_a(x)=\rho^{-1}\widetilde m_a(\rho^{-1}x), \quad x\in D_{\rho/2},
\end{equation}
for $m=\varepsilon, \mu, \sigma$, hence it follows from \eqref{eq:current assumption} that 
\begin{equation}\label{eq:current assumption virtual}
(\sigma_a(x)\xi)\cdot\xi\geq c_0\rho^{-1}|\xi|^2,\quad\mbox{for}\ 
\forall\,\xi\in\mathbb{R}^3 \ \mbox{and}\  \mbox{a.e.}\ x\in supp(J)\cap D_{\rho/2}.
\end{equation}

Next we shall establish a series of lemmas which provide several crucial relations 
and estimates for the proof of Theorem~\ref{thm:main}. 

\begin{lem}\label{lem:1}
Let $B_R$ be a central ball of radius $R$ such that $D_\rho\Subset B_R$. Then 
the solutions $E_\rho, H_\rho \in H_{loc}(\nabla\wedge; \mathbb{R}^3)$ 
to the system \eqref{eq:Maxwell virtual} satisfy
\begin{equation}\label{eq:control l2}
\begin{split}
&\int_{D_\rho\backslash D_{\rho/2}}\sigma_l E_\rho^-\cdot\overline{E_\rho^-}\ dx+\int_{D_{\rho/2}}\sigma_a E_\rho^-\cdot\overline{E_\rho^-}\ dx\\
=& \Re\int_{\partial B_R} (\nu\wedge\overline{E_\rho^+})\cdot\left[ \nu\wedge(\nu\wedge H_\rho^+) \right ]\ ds_x
+\Re\int_{\partial B_R} (\nu\wedge\overline{E^i})\cdot\left[ \nu\wedge(\nu\wedge H_\rho^+) \right ]\ ds_x\\
+&\Re\int_{\partial B_R} (\nu\wedge\overline{E^+_\rho})\cdot\left[ \nu\wedge(\nu\wedge H^i) \right ]\ ds_x
+\Re\int_{D_\rho} J\cdot\overline{E_\rho^-}\ dx.
\end{split}
\end{equation}
\end{lem}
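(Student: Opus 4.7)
The identity \eqref{eq:control l2} is the standard real-valued energy/Poynting balance for the virtual Maxwell system \eqref{eq:Maxwell virtual} on the ball $B_R$, and my plan is to obtain it in three routine steps: a curl integration by parts, a real-part cancellation, and an incident/scattered splitting of the boundary trace.

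The first step is to start from the second equation of \eqref{eq:Maxwell virtual}, written as $\nabla\wedge H_\rho = J - i\omega\,\varepsilon_\rho E_\rho + \sigma_\rho E_\rho$, pair it with $\overline{E_\rho}$, and integrate over $B_R$. Applying the Green identity
\[
\int_{B_R}(\nabla\wedge H_\rho)\cdot\overline{E_\rho}\,dx = \int_{B_R} H_\rho\cdot\overline{\nabla\wedge E_\rho}\,dx + \int_{\partial B_R}(\nu\wedge H_\rho)\cdot\overline{E_\rho}\,ds,
\]
which is legitimate because $(E_\rho,H_\rho)\in H_{loc}(\nabla\wedge;\mathbb{R}^3)$ and the tangential traces on the smooth surface $\partial B_R$ pair through the form $\mathcal{B}$ of \eqref{eq:duality}, and then substituting $\overline{\nabla\wedge E_\rho}=-i\omega\mu_\rho\overline{H_\rho}$, I arrive at a complex identity. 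Taking the real part eliminates the two purely imaginary volume contributions $-i\omega\int\varepsilon_\rho E_\rho\cdot\overline{E_\rho}\,dx$ and $i\omega\int\mu_\rho |H_\rho|^2\,dx$, which are real because $\varepsilon_\rho$ and $\mu_\rho$ are real symmetric. Since $\sigma_\rho$ vanishes in $B_R\setminus\overline{D}_\rho$, equals $\sigma_l$ on $D_\rho\setminus\overline{D}_{\rho/2}$ and equals $\sigma_a$ on $D_{\rho/2}$, and $J$ is supported in $D_\rho$ where $E_\rho=E_\rho^-$, the volume terms of the resulting identity coincide with those appearing in \eqref{eq:control l2}.

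The second step is to split the boundary data via $E_\rho|_{\partial B_R}=E_\rho^++E^i$ and $H_\rho|_{\partial B_R}=H_\rho^++H^i$, giving four boundary contributions. The purely incident piece $\int_{\partial B_R}(\nu\wedge H^i)\cdot\overline{E^i}\,ds$ is purely imaginary: indeed, applying the same Green identity to the free-space pair $(E^i,H^i)$ in $B_R$ (with $\sigma_0=0$ and no source) reduces it to $i\omega\int_{B_R}[\mu_0|H^i|^2-\varepsilon_0|E^i|^2]\,dx$, so it drops out under $\Re$. The three surviving boundary integrals are then rewritten in the form stated in \eqref{eq:control l2} by the algebraic identity
\[
(\nu\wedge\overline{E})\cdot[\nu\wedge(\nu\wedge H)]=(\nu\wedge H)\cdot\overline{E},
\]
which follows from the $\text{BAC-CAB}$ expansion $\nu\wedge(\nu\wedge H)=(\nu\cdot H)\nu-H$, together with $(\nu\wedge\overline{E})\cdot\nu=0$ and the skew-symmetry $(\nu\wedge a)\cdot b=-(\nu\wedge b)\cdot a$. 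Applying this identity to each of the three remaining boundary pieces lands exactly on the three $\nu\wedge(\nu\wedge\cdot)$ boundary terms in \eqref{eq:control l2}.

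There is no genuine obstacle in the argument: this is bookkeeping around the Poynting identity. The only point that requires any care is the justification of the curl integration by parts and of the boundary duality pairing, but both are automatic from $H_{loc}(\nabla\wedge;\mathbb{R}^3)$ regularity of $(E_\rho,H_\rho)$ and the $TH_{\text{Div}}^{-1/2}$–$TH^{1/2}$ framework collected in the preceding subsection; the algebraic reshuffling of the boundary terms into the tangential-projector form is the most visible, but still elementary, piece of the computation.
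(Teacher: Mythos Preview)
Your argument is correct and reaches the identity, but it takes a genuinely shorter route than the paper. The paper splits $B_R$ into the interior piece $D_\rho$ and the shell $B_R\setminus\overline{D}_\rho$, performs the curl integration by parts separately in each piece (its displays \eqref{eq:d1} and \eqref{eq:d2}), takes real parts, and is then left with matching boundary contributions on $\partial D_\rho$; it spends the remainder of the proof (displays \eqref{eq:d5}--\eqref{eq:d9}) using the transmission conditions $\nu\wedge E_\rho^-=\nu\wedge E_\rho^++\nu\wedge E^i$, $\nu\wedge H_\rho^-=\nu\wedge H_\rho^++\nu\wedge H^i$ to expand those $\partial D_\rho$ terms and then a second round of Green identities in $B_R\setminus\overline{D}_\rho$ to transport the mixed incident/scattered pieces from $\partial D_\rho$ out to $\partial B_R$. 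You sidestep all of that by integrating once over the whole ball $B_R$, which is legitimate precisely because $(E_\rho,H_\rho)\in H_{loc}(\nabla\wedge;\mathbb{R}^3)$ so no interior interface terms appear; the incident/scattered splitting then happens only once, directly on $\partial B_R$, and the purely incident boundary term is disposed of by the same observation the paper uses in \eqref{eq:d9}. Your approach buys brevity and avoids the transmission-condition bookkeeping; the paper's decomposition, while longer, makes the role of the interior and exterior equations more explicit and keeps $E_\rho^-$ and $E_\rho^+$ visibly separated throughout, which is perhaps pedagogically clearer but not mathematically necessary.
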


\begin{proof}
First of all, it is easy to see that 
the solutions $(E_\rho^\pm, H_\rho^\pm)$ to \eqref{eq:Maxwell virtual} satisfy
\begin{align}
\nabla\wedge E_\rho^-=-i\omega\mu_\rho H_\rho^-\qquad & \mbox{in\ \ $D_\rho$},\label{eq:l1}\\
\nabla\wedge H_\rho^-=-i\omega\left(\varepsilon_\rho+i\frac{\sigma_\rho}{\omega}\right)E_\rho^-\qquad & \mbox{in\ \ $D_\rho$},\label{eq:l2}\\
 \nabla\wedge E_\rho^+=i\omega H_\rho^+,\quad \nabla\wedge H_\rho^+=-i\omega E_\rho^+\qquad & \mbox{in\ \ $B_R\backslash\overline{D}_\rho$},\label{eq:l3}\\
\nu\wedge E_\rho^-=\nu\wedge E_\rho^++\nu\wedge E^i\ \qquad &\mbox{on\ \ $\partial D_\rho$},\label{eq:l4}\\
\nu\wedge H_\rho^-=\nu\wedge H_\rho^++\nu\wedge H^i\ \qquad &\mbox{on\ \ $\partial D_\rho$}.\label{eq:l5}
\end{align}
Using \eqref{eq:l3} and integrating by parts we can deduce 
\begin{equation}\label{eq:d1}
\begin{split}
&-i\omega\int_{B_R\backslash\overline{D}_\rho} E_\rho^+\cdot\overline{E_\rho^+}\ ds=\int_{B_R\backslash\overline{D}_\rho} (\nabla\wedge H_\rho^+)\cdot \overline{E_\rho^+}\ dx\\
=&\int_{B_R\backslash\overline{D}_\rho} H_\rho^+\cdot(\nabla\wedge\overline{E_\rho^+})\ d x-\int_{\partial(B_R\backslash\overline{D}_\rho)} (\nu\wedge\overline{E_\rho^+})\cdot H_\rho^+\ ds_x\\
=&-i\omega\int_{B_R\backslash\overline{D}_\rho} H_\rho^+ ds_x+\int_{\partial B_R}(\nu\wedge\overline{E_\rho^+})\cdot\left[\nu\wedge(\nu\wedge H_\rho^+) \right]\ ds_x\\
&\hspace*{3.3cm} -\int_{\partial D_\rho}(\nu\wedge\overline{E_\rho^+})\cdot\left[\nu\wedge(\nu\wedge H_\rho^+) \right]\ ds_x\,,
\end{split}
\end{equation}
while using \eqref{eq:l1}--\eqref{eq:l2} and integrating by parts, we can write 
\begin{equation}\label{eq:d2}
\begin{split}
&-\int_{D_\rho\backslash\overline{D}_{\rho/2}}i\omega\left(\varepsilon_l+i\frac{\sigma_l}{\omega} \right)E_\rho^-\cdot\overline{E_\rho^-}\ dx-\int_{{D}_{\rho/2}}i\omega\left(\varepsilon_a+i\frac{\sigma_a}{\omega} \right)E_\rho^-\cdot\overline{E_\rho^-}\ dx\\
=&\int_{D_\rho}(\nabla\wedge H_\rho^-)\cdot\overline{E_\rho^-}\ dx+\int_{D_\rho} J\cdot\overline{E_\rho^-}\ dx\\
=&\int_{D_\rho} H_\rho^-\cdot(\nabla\wedge\overline{E_\rho^-})\ dx-\int_{\partial D_\rho}(\nu\wedge \overline{E_\rho^-})\cdot H_\rho^- \ ds_x+\int_{D_\rho} J\cdot\overline{E_\rho^-}\ dx\\
=&\int_{D_\rho} H_\rho^-\cdot(-i\omega\mu_\rho H_\rho^-)\ dx+\int_{D_\rho} J\cdot\overline{E_\rho^-}\ dx\\
&+\int_{\partial D_\rho} (\nu\wedge \overline{E_\rho^-})\cdot\left[\nu\wedge(\nu\wedge H_\rho^-)\right]\ ds_x.
\end{split}
\end{equation}
Now by taking the real parts of both sides of \eqref{eq:d2}, we obtain 
\begin{equation}\label{eq:d3}
\begin{split}
&\int_{D_\rho\backslash\overline{D}_{\rho/2}}\sigma_l E_\rho^-\cdot\overline{E_\rho^-}\ dx+\int_{D_{\rho/2}}\sigma_a E_\rho^-\cdot\overline{E_\rho^-}\ dx\\
=&\Re\int_{\partial D_\rho} (\nu\wedge \overline{E_\rho^-})\cdot \left[\nu\wedge(\nu\wedge H_\rho^-)\right]\ ds_x
+\Re\int_{D_\rho} J\cdot\overline{E_\rho^-}\ dx\,.
\end{split}
\end{equation}
On the other hand, taking the real parts of both sides of \eqref{eq:d1}, then 
adding them to \eqref{eq:d3},  we arrive at 
\begin{equation}\label{eq:d4}
\begin{split}
&\int_{D_\rho\backslash\overline{D}_{\rho/2}}\sigma_l E_\rho^-\cdot\overline{E_\rho^-}\ dx+\int_{D_{\rho/2}}\sigma_a E_\rho^-\cdot\overline{E_\rho^-}\ dx\\
=&\Re\int_{\partial B_R}(\nu\wedge\overline{E_\rho^+})\cdot\left[\nu\wedge(\nu\wedge H_\rho^+) \right]\ ds_x+\Re\int_{D_\rho} J\cdot\overline{E_\rho^-}\ dx\\
-&\Re\int_{\partial D_\rho}(\nu\wedge\overline{E_\rho^+})\cdot\left[\nu\wedge(\nu\wedge H_\rho^+) \right]\ ds_x
+\Re\int_{\partial D_\rho} (\nu\wedge \overline{E_\rho^-})\cdot \left[\nu\wedge(\nu\wedge H_\rho^-)\right]\ ds_x\,.
\end{split}
\end{equation}
For the last two terms in \eqref{eq:d4}, we can use the transmission conditions \eqref{eq:l4}--\eqref{eq:l5} 
and integration by parts to write 
\begin{equation}\label{eq:d5}
\begin{split}
&\int_{\partial D_\rho}(\nu\wedge\overline{E_\rho^-})\cdot \left[\nu\wedge(\nu\wedge H_\rho^-)\right]\ ds_x
-\int_{\partial D_\rho}(\nu\wedge\overline{E_\rho^+})\cdot\left[\nu\wedge(\nu\wedge H_\rho^+) \right]\ ds_x\\
=&\int_{\partial D_\rho} (\nu\wedge\overline{E^i})\cdot\left[\nu\wedge(\nu\wedge H_\rho^+) \right]\ ds_x
+\int_{\partial D_\rho} (\nu\wedge\overline{E^+_\rho})\cdot\left[\nu\wedge(\nu\wedge H^i) \right]\ ds_x\\
&\qquad+\int_{\partial D_\rho} (\nu\wedge\overline{E^i})\cdot\left[\nu\wedge(\nu\wedge H^i) \right]\ ds_x\,,
\end{split}
\end{equation}
while the following holds for the first two terms in the RHS of \eqref{eq:d5},  
\begin{equation}\label{eq:d6}
\begin{split}
&\Re\int_{\partial D_\rho} (\nu\wedge\overline{E^i})\cdot\left[\nu\wedge(\nu\wedge H_\rho^+) \right]\ ds_x
+\Re\int_{\partial D_\rho} (\nu\wedge\overline{E^+_\rho})\cdot\left[\nu\wedge(\nu\wedge H^i) \right]\ ds_x\\
=&\Re\int_{\partial B_R} (\nu\wedge\overline{E^i})\cdot\left[\nu\wedge(\nu\wedge H_\rho^+) \right]\ ds_x
+\Re\int_{\partial B_R}(\nu\wedge\overline{E_\rho^+})\cdot\left[\nu\wedge(\nu\wedge H^i) \right]\ ds_x\,.
\end{split}
\end{equation}
In fact, we immediately derive by integration by parts that 
\begin{equation}\label{eq:d7}
\begin{split}
&-\int_{\partial D_\rho} (\nu\wedge\overline{E^i})\cdot\left[\nu\wedge(\nu\wedge H_\rho^+) \right]\ ds_x+\int_{\partial B_R}(\nu\wedge\overline{E^i})\cdot\left[ \nu\wedge(\nu\wedge H_\rho^+)\right]\ ds_x\\
=&\int_{B_R\backslash\overline{D}_\rho} (\nabla\wedge H_\rho^+)\cdot\overline{E^i}\ dx-\int_{B_R\backslash\overline{D}_\rho} H_\rho^+\cdot(\nabla\wedge\overline{E^i})\ dx\\
=&i\omega\int_{B_R\backslash\overline{D}_\rho}\left[ -E_\rho^+\cdot\overline{E^i}+H_\rho^
+\cdot\overline{H^i}\ dx \right]\,
\end{split}
\end{equation}
and 
\begin{equation}\label{eq:d8}
\begin{split}
&-\int_{\partial D_\rho}(\nu\wedge\overline{E_\rho^+})\cdot\left[ \nu\wedge(\nu\wedge H^i) \right]\ ds_x+\int_{\partial B_R}(\nu\wedge \overline{E_\rho^+})\cdot\left[\nu\wedge(\nu\wedge H^i)\right]\ ds_x\\
=& i\omega\int_{B_R\backslash\overline{D}_\rho}\left[-E^i\cdot\overline{E_\rho^+}+H^i\cdot\overline{H_\rho^+} \right]\ dx\,.
\end{split}
\end{equation}
Clearly, \eqref{eq:d6} is a direct consequence of \eqref{eq:d7}-\eqref{eq:d8}. 
For the last term in \eqref{eq:d5}, we can use the Maxwell equations~\eqref{eq:Maxwell incident} and 
integration by parts to obtain 
\begin{equation}\label{eq:d9}
\begin{split}
\int_{\partial D_\rho} (\nu\wedge\overline{E^i})\cdot\left[ \nu\wedge(\nu\wedge H^i) \right]\ ds_x
=&\int_{D_\rho}\left((\nu\wedge H^i)\cdot\overline{E^i}-H^i\cdot(\nabla\wedge\overline{E^i})\right)\ dx\\
=&i\omega\int_{D_\rho}\left(-|E^i|^2+ |H^i|^2 \right )\ dx.
\end{split}
\end{equation}

Now combining \eqref{eq:d4}-\eqref{eq:d6} with \eqref{eq:d9} gives \eqref{eq:control l2}, 
so completes the proof of Lemma~\ref{lem:1}.
\end{proof}

In order to reduce the concerned scattering problem in the whole space $\mathbb{R}^3$ to a bounded domain problem, we next introduce the following auxiliary Maxwell system,
\begin{equation}\label{eq:auxiliary}
\begin{cases}
\nabla\wedge E-i\omega\mu_0 H=0 &\mbox{in\ \ $\mathbb{R}^3\backslash\overline{B}_R$},\\
\nabla\wedge H+i\omega\varepsilon_0 E=0 &\mbox{in\ \ $\mathbb{R}^3\backslash\overline{B}_R$},\\
\displaystyle{\lim_{|x|\rightarrow+\infty}|x|\Big| (\nabla\wedge E)(x)\wedge\frac{x}{|x|}-i\omega E(x) \Big|=0. }
\end{cases}
\end{equation}
Associated with the system \eqref{eq:auxiliary}, we introduce a boundary operator $\Lambda$, 
which maps the tangential component of the electric field  
to the tangential component of the magnetic field:
\begin{equation}\label{eq:DtN}
\Lambda(\nu\wedge E|_{\partial B_R})=\nu\wedge H|_{\partial B_R}:\ TH_{\text{Div}}^{-1/2}(\partial B_R)\rightarrow TH_{\text{Div}}^{-1/2}(\partial B_R),
\end{equation}
where $E, H\in H_{loc}(\nabla\wedge; \mathbb{R}^3\backslash\overline{B}_R)$
%$\wedge H_{loc}(\nabla\wedge; \mathbb{R}^3\backslash\overline{B}_R)$ 
are the unique solutions to \eqref{eq:auxiliary}. We choose $R$ such that
$D_\rho\Subset B_R\Subset \Omega$ and $\omega$ is not an interior EM eigenvalue in the sense that 
the following Maxwell equations have only the trivial solutions $\widetilde E=\widetilde H=0$:
\begin{equation}\label{eq:auxiliary2}
\begin{cases}
\nabla\wedge \widetilde E-i\omega\mu_0 \widetilde H=0 &\mbox{in\ \ ${B}_R$},\\
\nabla\wedge \widetilde H+i\omega\varepsilon_0 \widetilde E=0 &\mbox{in\ \ ${B}_R$},
\end{cases}
\end{equation}
if $\nu\wedge \widetilde E|_{\partial B_R}=0$ or $\nu\wedge\widetilde H|_{\partial B_R}=0$.
%then one must have that $\widetilde E=0$ and $\widetilde H=0$. 
We know the boundary operator $\Lambda$ in (\ref{eq:DtN}) is continuous and invertible \cite{Ned}. 

The next two lemmas provide some crucial estimates of the solutions 
$E_\rho, H_\rho\in H_{loc}(\nabla\wedge; \mathbb{R}^3)$
to the system \eqref{eq:Maxwell virtual}.
\begin{lem}\label{lem:2}
The solutions $E_\rho$, $H_\rho$ to the system \eqref{eq:Maxwell virtual} admit the following estimate,
\begin{eqnarray}
\int_{D_\rho\backslash D_{\rho/2}} |E_\rho^-|^2\ dx
&\leq & C\rho^s\bigg\{ \|\nu\wedge E_\rho^+\|_{TH^{-1/2}_{\text{\emph{Div}}}(\partial B_R)} \|\Lambda(\nu\wedge E_\rho^+)\|_{TH^{-1/2}_{\text{\emph{Div}}}(\partial B_R)}\nb\\
&&+ \|\nu\wedge E^i\|_{TH^{-1/2}_{\text{\emph{Div}}}(\partial B_R)}\|\Lambda(\nu\wedge E_\rho^+)\|_{TH^{-1/2}_{\text{\emph{Div}}}(\partial B_R)}\nb\\
&&+ \|\nu\wedge H^i\|_{TH^{-1/2}_{\text{\emph{Div}}}(\partial B_R)}\|\nu\wedge E_\rho^+\|_{TH^{-1/2}_{\text{\emph{Div}}}(\partial B_R)}\bigg\}\nb\\
&&+C\rho^{2s-1}\|\widetilde J\|^2_{L^2(D\backslash D_{1/2})^3}+C\rho^{s}\|\widetilde J\|^2_{L^2(D_{1/2})^3}
\label{eq:control l2 2}
\end{eqnarray}
where $C$ is a constant depending only on $c_0$ in \eqref{eq:current assumption virtual}.
\end{lem}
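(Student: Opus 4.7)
\medskip

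\noindent\textbf{Proof plan for Lemma~\ref{lem:2}.}
The strategy is to take the energy identity \eqref{eq:control l2} from Lemma~\ref{lem:1} as the starting point, bound the two types of terms on its right-hand side (boundary terms on $\partial B_R$ and the interior current term on $D_\rho$), and extract the factor $\rho^s$ from the ellipticity of $\sigma_l$. More precisely, by \eqref{eq:lossy layer 2} and the regularity assumption \eqref{eq:uniform regular condition} on $\beta$, the conductivity $\sigma_l(x)=\rho^{-s}\beta(x/\rho)$ satisfies $\sigma_l(x)\xi\cdot\xi\geq c_0\rho^{-s}|\xi|^2$ throughout $D_\rho\setminus\overline{D}_{\rho/2}$. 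Hence
\[
\int_{D_\rho\setminus D_{\rho/2}}|E_\rho^-|^2\,dx \;\leq\; \frac{\rho^s}{c_0}\int_{D_\rho\setminus D_{\rho/2}}\sigma_l E_\rho^-\cdot\overline{E_\rho^-}\,dx,
\]
and it suffices to bound $\int_{D_\rho\setminus D_{\rho/2}}\sigma_l E_\rho^-\cdot\overline{E_\rho^-}\,dx$ plus the non-negative $\sigma_a$-term by the right-hand side of \eqref{eq:control l2 2} multiplied by $\rho^{-s}$.

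For the three boundary integrals on $\partial B_R$ in \eqref{eq:control l2}, I would use the fact that $(E_\rho^+,H_\rho^+)$ satisfies the free Maxwell system and the Silver--M\"uller radiation condition outside $B_R$, so $\nu\wedge H_\rho^+|_{\partial B_R}=\Lambda(\nu\wedge E_\rho^+|_{\partial B_R})$ via the operator in \eqref{eq:DtN}. Each integrand has the form $(\nu\wedge A)\cdot[\nu\wedge(\nu\wedge B)]$, which after the elementary vector identity $\nu\wedge(\nu\wedge B)=-B\wedge\nu$ on $\partial B_R$ becomes a multiple of the skew-symmetric duality pairing $\mathcal{B}(\nu\wedge A,\nu\wedge B)$ from \eqref{eq:duality}. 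The continuity of $\mathcal{B}$ on $TH^{-1/2}_{\mathrm{Div}}(\partial B_R)$ and the expression $\nu\wedge H_\rho^+=\Lambda(\nu\wedge E_\rho^+)$ then yield exactly the three boundary norm products appearing on the right of \eqref{eq:control l2 2}.

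The remaining piece is $\Re\int_{D_\rho}J\cdot\overline{E_\rho^-}\,dx$, which I would split into the contributions from $D_\rho\setminus D_{\rho/2}$ and $D_{\rho/2}$ and absorb them into the left-hand side of the energy identity using weighted Cauchy--Schwarz. On $D_\rho\setminus D_{\rho/2}$, writing $|J\cdot\overline{E_\rho^-}|\leq \sigma_l^{-1/2}|J|\cdot\sigma_l^{1/2}|E_\rho^-|$ and applying Young's inequality gives an absorbable quantity plus a term bounded by $\rho^s\|J\|_{L^2(D_\rho\setminus D_{\rho/2})}^2$; the change of variables $y=x/\rho$, together with $J(x)=\rho^{-2}\widetilde J(x/\rho)$, converts this into $C\rho^{s-1}\|\widetilde J\|_{L^2(D\setminus D_{1/2})}^2$. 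On $D_{\rho/2}\cap\mathrm{supp}(J)$, the crucial lower bound \eqref{eq:current assumption virtual} gives $\sigma_a^{-1}\leq c_0^{-1}\rho$, which after the same $\sigma_a$-weighted Young inequality and a scaling produces $C\|\widetilde J\|_{L^2(D_{1/2})}^2$, whose prefactor after multiplying by $\rho^s$ yields the $C\rho^s\|\widetilde J\|_{L^2(D_{1/2})}^2$ claimed. The main technical obstacle is balancing the Young's inequality constants so that the absorbed parts are strictly controlled by the ellipticity constants of $\sigma_l$ and $\sigma_a$ uniformly in $\rho$; once this is done, combining the absorbed identity with the boundary estimate and multiplying by $\rho^s/c_0$ gives \eqref{eq:control l2 2}.
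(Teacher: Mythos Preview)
Your proposal is correct and follows essentially the same route as the paper's proof: start from the energy identity of Lemma~\ref{lem:1}, use the ellipticity lower bound $\sigma_l\geq c_0\rho^{-s}$ to extract the $\rho^s$ factor, bound the three $\partial B_R$ integrals via the duality pairing \eqref{eq:duality} together with $\nu\wedge H_\rho^+=\Lambda(\nu\wedge E_\rho^+)$, and handle the current term by a weighted Young inequality whose absorbable parts are controlled by $\int\sigma_l|E_\rho^-|^2$ and $\int\sigma_a|E_\rho^-|^2$, followed by the scaling $\|J\|_{L^2(D_\rho)}^2=\rho^{-1}\|\widetilde J\|_{L^2(D)}^2$. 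The only cosmetic differences are that the paper first assumes without loss of generality $\mathrm{supp}(J)=D_{\rho/2}$ (so that \eqref{eq:current assumption virtual} holds on all of $D_{\rho/2}$) and uses the scalar Young splitting $\frac{\rho^s}{2c_0}\|J\|^2+\frac{c_0\rho^{-s}}{2}\|E_\rho^-\|^2$ rather than the matrix-weighted form $\sigma_l^{-1/2}|J|\cdot\sigma_l^{1/2}|E_\rho^-|$ you wrote; since $\sigma_l$ is a symmetric positive-definite matrix, your notation should be read in that sense, but the scalar version already suffices and avoids the issue.
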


\begin{proof}
Without loss of generality, we may assume that $supp(J)=D_{\rho/2}$. By using \eqref{eq:lossy layer 2}-\eqref{eq:uniform regular condition}, \eqref{eq:current assumption virtual}, \eqref{eq:duality} and the Cauchy-Schwartz inequality, we first deduce from \eqref{eq:control l2} that
\begin{equation}\label{eq:e1}
\begin{split}
& c_0\rho^{-s}\|E_\rho^-\|^2_{L^2(D_\rho\backslash D_{\rho/2})^3}+c_0\rho^{-1}\|E_\rho^-\|^2_{L^2(D_{\rho/2})^3}\\
\leq & \int_{D_\rho\backslash D_{\rho/2}}\sigma_l E_\rho^-\cdot\overline{E_\rho^-}\ dx+\int_{D_{\rho/2}}\sigma_a E_\rho^-\cdot\overline{E_\rho^-}\ dx\\
\leq & \bigg\{ \|\nu\wedge E_\rho^+\|_{TH^{-1/2}_{\text{{Div}}}(\partial B_R)} \|\Lambda(\nu\wedge E_\rho^+)\|_{TH^{-1/2}_{\text{{Div}}}(\partial B_R)}\\
&\qquad\ \ + \|\nu\wedge E^i\|_{TH^{-1/2}_{\text{{Div}}}(\partial B_R)}\|\Lambda(\nu\wedge E_\rho^+)\|_{TH^{-1/2}_{\text{{Div}}}(\partial B_R)}\\
&\qquad\ \ + \|\nu\wedge H^i\|_{TH^{-1/2}_{\text{{Div}}}(\partial B_R)}\|\nu\wedge E_\rho^+\|_{TH^{-1/2}_{\text{{Div}}}(\partial B_R)}\bigg\}
+\Big|\int_{D_\rho} J\cdot\overline{E_\rho^-}\ dx\Big|.
\end{split}
\end{equation}
For the last term above, it follows from the relation 
\[
\|\widetilde{J}(\frac{\cdot}{\rho})\|_{L^2(D_\rho)}=\rho^{3/2}\|\widetilde{J}(\cdot)\|_{L^2(D)}
\] 
and  \eqref{eq:current virtual}  that 
\begin{equation}\label{eq:e2}
\begin{split}
\Big| \int_{D_\rho} J\cdot\overline{E_\rho^-} \ dx \Big|
\leq & \|J\|_{L^2(D_\rho\backslash D_{\rho/2})^3}\|E_\rho^-\|_{L^2(D_\rho\backslash D_{\rho/2})^3}+\|J\|_{L^2(D_{\rho/2})^3}\|E_\rho^-\|_{L^2(D_\rho/2)^3}\\
\leq &\frac{\rho^s}{2c_0} \|\rho^{-2} \widetilde{J}(\frac{\cdot}{\rho})\|^2_{L^2(D_\rho\backslash D_{\rho/2})^3}+\frac{c_0\rho^{-s}}{2}\|E_\rho^-\|^2_{L^2(D_\rho\backslash D_{\rho/2})^3}\\
&+\frac{\rho}{2c_0}\|\rho^{-2}\widetilde{J}(\frac{\cdot}{\rho})\|^2_{L^2(D_{\rho/2})^3}+\frac{c_0\rho^{-1}}{2}\|E_\rho^-\|^2_{L^2(D_{\rho/2})^3}\\
=& \frac{\rho^{s-1}}{2c_0}\|\widetilde{J}\|^2_{L^2(D\backslash D_{1/2})}+\frac{c_0\rho^{-s}}{2}\|E_\rho^-\|^2_{L^2(D_\rho\backslash D_{\rho/2})^3}\\
&+\frac{1}{2c_0}\|\widetilde{J}\|^2_{L^2(D_{1/2})^3}+\frac{c_0\rho^{-s}}{2}\|E_\rho^-\|^2_{L^2(D_\rho\backslash D_{\rho/2})^3}\,,
\end{split}
\end{equation}
where the two terms involving $E_\rho^-$ can be estimated by using 
\eqref{eq:e1}-\eqref{eq:e2} as follows
\begin{equation}\label{eq:e3}
\begin{split}
& \frac{c_0\rho^{-s}}{2}\|E_\rho^-\|^2_{L^2(D_\rho\backslash D_{\rho/2})^3}+\frac{c_0\rho^{-1}}{2}\|E_\rho^-\|^2_{L^2(D_{\rho/2})^3}\\
\leq & \bigg\{ \|\nu\wedge E_\rho^+\|_{TH^{-1/2}_{\text{{Div}}}(\partial B_R)} \|\Lambda(\nu\wedge E_\rho^+)\|_{TH^{-1/2}_{\text{{Div}}}(\partial B_R)}\\
&\qquad\ \ + \|\nu\wedge E^i\|_{TH^{-1/2}_{\text{{Div}}}(\partial B_R)}\|\Lambda(\nu\wedge E_\rho^+)\|_{TH^{-1/2}_{\text{{Div}}}(\partial B_R)}\\
&\qquad\ \ + \|\nu\wedge H^i\|_{TH^{-1/2}_{\text{{Div}}}(\partial B_R)}\|\nu\wedge E_\rho^+\|_{TH^{-1/2}_{\text{{Div}}}(\partial B_R)}\bigg\}\\
&+\frac{\rho^{s-1}}{2c_0}\|\widetilde{J}\|^2_{L^2(D\backslash D_{1/2})}+\frac{1}{2c_0}\|\widetilde{J}\|^2_{L^2(D_{1/2})^3},
\end{split}
\end{equation}
which, along with (\ref{eq:e1}), implies \eqref{eq:control l2 2} immediately.
\end{proof}

\begin{lem}\label{lem:bc control}
The following estimate holds for the solutions $(E_\rho, H_\rho)$ to the system 
\eqref{eq:Maxwell virtual}:
\begin{eqnarray}
\left\| (\nu\wedge E_\rho^-) (\rho\ \cdot)\right\|_{TH^{-1/2}(\partial D)}
&\leq & C \rho^{\frac{\zeta_1}{2}-2} \bigg\{ \|\nu\wedge E_\rho^+\|^{1/2}_{TH^{-1/2}_{\text{\emph{Div}}}(\partial B_R)} \|\Lambda(\nu\wedge E_\rho^+)\|^{1/2}_{TH^{-1/2}_{\text{\emph{Div}}}(\partial B_R)}\nb\\
&&+ \|\nu\wedge E^i\|^{1/2}_{TH^{-1/2}_{\text{\emph{Div}}}(\partial B_R)}\|\Lambda(\nu\wedge E_\rho^+)\|^{1/2}_{TH^{-1/2}_{\text{\emph{Div}}}(\partial B_R)}\nb\\
&&+ \|\nu\wedge H^i\|^{1/2}_{TH^{-1/2}_{\text{\emph{Div}}}(\partial B_R)}\|\nu\wedge E_\rho^+\|^{1/2}_{TH^{-1/2}_{\text{\emph{Div}}}(\partial B_R)}\bigg\}\nb\\
&&+C\rho^{\frac{\zeta_1}{2}-2}\|\widetilde J\|_{L^2(D_{1/2})^3}+C\rho^{\zeta_2-2}\|\widetilde J \|_{L^2(D\backslash D_{1/2})^3}, \label{eq:bc control 1}
\end{eqnarray}
where $\zeta_1$ and $\zeta_2$ are given in \eqref{eq:zeta1}--\eqref{eq:zeta2}, and $C$ is a positive constant dependent only on $D$,$\Omega$ and $c_0$ in \eqref{eq:current assumption virtual}, but independent of $E^i, H^i$, $\widetilde J$ and $\rho$.
\end{lem}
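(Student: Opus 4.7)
The plan is to reduce the claim to $L^2$-type estimates on the fixed shell $D\setminus \overline D_{1/2}$ by rescaling, and then to supply those via Lemma~\ref{lem:2} together with an $H$-analog of Lemma~\ref{lem:1}. Set $\hat E(\hat x):=E_\rho^-(\rho\hat x)$ and $\hat H(\hat x):=H_\rho^-(\rho\hat x)$ on $D\setminus\overline D_{1/2}$. For a test field $\phi\in TH^{1/2}(\partial D)$, extend to $\Phi\in H^1(D\setminus\overline D_{1/2})^3$ with $\Phi|_{\partial D_{1/2}}=0$ and $\|\Phi\|_{H^1}\leq C\|\phi\|_{TH^{1/2}}$, so Green's identity yields
\[
\int_{\partial D}(\nu\wedge\hat E)\cdot\phi\,ds=\int_{D\setminus D_{1/2}}\!\bigl[\hat E\cdot(\nabla\wedge\Phi)-\Phi\cdot(\nabla\wedge\hat E)\bigr]\,dx,
\]
and by duality $\|\nu\wedge\hat E\|_{TH^{-1/2}(\partial D)}\leq C(\|\hat E\|_{L^2}+\|\hat\nabla\wedge\hat E\|_{L^2})$. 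Changing variables and using $\nabla\wedge E_\rho^-=i\omega\mu_l H_\rho^-$ with $\mu_l=\rho^{-t}\gamma$ on the layer, the right side equals $C\bigl(\rho^{-3/2}\|E_\rho^-\|_{L^2(D_\rho\setminus D_{\rho/2})}+\rho^{-1/2-t}\|H_\rho^-\|_{L^2(D_\rho\setminus D_{\rho/2})}\bigr)$.

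The $E_\rho^-$ piece is furnished by Lemma~\ref{lem:2}: writing $\mathcal B$ for the bracketed sum of $\partial B_R$ norms there, and using $\zeta_1\leq s+1$ and $\zeta_2\leq s$, the factor $\rho^{-3/2}$ applied to $\|E_\rho^-\|_{L^2}\leq C(\rho^{s/2}\mathcal B^{1/2}+\rho^{(s-1)/2}\|\widetilde J\|_{L^2(D\setminus D_{1/2})}+\rho^{s/2}\|\widetilde J\|_{L^2(D_{1/2})})$ already delivers the targeted $\rho^{\zeta_1/2-2}\mathcal B^{1/2}$, $\rho^{\zeta_1/2-2}\|\widetilde J\|_{L^2(D_{1/2})}$ and $\rho^{\zeta_2-2}\|\widetilde J\|_{L^2(D\setminus D_{1/2})}$ contributions. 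For $H_\rho^-$ the plan is to rerun the proof of Lemma~\ref{lem:1} keeping the \emph{imaginary} rather than the real parts of \eqref{eq:d1} and \eqref{eq:d2}; the algebraic steps \eqref{eq:d5}--\eqref{eq:d9} that move the $\partial D_\rho$ boundary term onto $\partial B_R$ are identical. The upshot is
\[
\omega\!\int_{D_\rho}\!\mu_\rho|H_\rho^-|^2\,dx=\omega\!\int_{D_\rho}\!\varepsilon_\rho|E_\rho^-|^2\,dx+\Im\!\int_{D_\rho}J\cdot\overline{E_\rho^-}\,dx+\mathcal B'',
\]
where $\mathcal B''$ is a combination of $\partial B_R$ norms of the same structure as $\mathcal B$ together with harmless $O(\rho^3)$ volume contributions from the smooth fields $E^i,H^i$. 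Using $\mu_\rho\geq c\rho^{-t}$ on the layer, splitting $\int\varepsilon_\rho|E_\rho^-|^2$ into the layer part ($\rho^{-r}\alpha$) and the core part ($\rho^{-1}\tilde\varepsilon_a$), and substituting Lemma~\ref{lem:2} together with the $L^2(D_{\rho/2})$-companion bound that falls out of \eqref{eq:e3}, give $\|H_\rho^-\|^2_{L^2(D_\rho\setminus D_{\rho/2})}\leq C(\rho^{s-r+t}+\rho^t)\mathcal B+(\text{current terms})$, and therefore
\[
\rho^{-1/2-t}\|H_\rho^-\|_{L^2(D_\rho\setminus D_{\rho/2})}\leq C\bigl(\rho^{(s-r-t-1)/2}+\rho^{-(t+1)/2}\bigr)\mathcal B^{1/2}+(\text{current terms}).
\]

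Matching these two exponents to $\zeta_1/2-2$ is a short case check against $\zeta_1=\min(s+1,\,s+5-2(t+r),\,5-2t-s)$: for $\zeta_1=s+1$ the side conditions are $r+t\leq 2$ and $s+t\leq 2$; for $\zeta_1=s+5-2(t+r)$ they read $r+t\geq 2$ and $s\leq r$; for $\zeta_1=5-2t-s$ they read $s+t\geq 2$ and $r\leq s$, and in each subcase one verifies both $(s-r-t-1)/2\geq\zeta_1/2-2$ and $-(t+1)/2\geq\zeta_1/2-2$. Young's inequality applied to $|\int J\cdot\overline{E_\rho^-}|$ yields the remaining $\|\widetilde J\|$-contributions with the correct $\zeta_1$ and $\zeta_2$ powers. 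The principal obstacle is the boundary integral $\Im\int_{\partial D_\rho}(\nu\wedge\overline{E_\rho^-})\cdot[\nu\wedge(\nu\wedge H_\rho^-)]$ appearing in the imaginary-part identity: at face value it reintroduces $\nu\wedge E_\rho^-$ on $\partial D_\rho$, the very quantity being estimated. This circularity is broken exactly as in the proof of Lemma~\ref{lem:1}: split the trace via the transmission conditions, transfer the scattered--scattered and mixed pieces to $\partial B_R$ by integration by parts in $B_R\setminus\overline D_\rho$ using the exterior Maxwell system for $(E_\rho^+,H_\rho^+)$, and dispatch the purely incident piece by Maxwell in $D_\rho$, producing only $O(\rho^3)$ contributions from the real-analytic $E^i,H^i$.
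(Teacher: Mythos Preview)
Your reduction to the $H(\nabla\wedge)$-norm on the rescaled shell is correct as far as it goes, and the $E_\rho^-$ piece is indeed covered by Lemma~\ref{lem:2}. The gap is in your control of $\|H_\rho^-\|_{L^2(D_\rho\setminus D_{\rho/2})}$ via the imaginary-part identity. Two issues arise. First, the transfer of the $\partial D_\rho$ boundary term to $\partial B_R$ is \emph{not} the same as in Lemma~\ref{lem:1}: taking imaginary parts of \eqref{eq:d1} leaves the volume term $\omega\int_{B_R\setminus D_\rho}(|E_\rho^+|^2-|H_\rho^+|^2)$, and taking imaginary parts of \eqref{eq:d7}--\eqref{eq:d8} leaves $-2\omega\int_{B_R\setminus D_\rho}\Re(H_\rho^+\cdot\overline{H^i}-E_\rho^+\cdot\overline{E^i})$. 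These are $L^2$ quantities of the scattered fields over a \emph{fixed} annulus, not products of $\partial B_R$ traces; you have no a~priori bound on them at this stage, and inserting them into your $H$-estimate makes the argument circular. (The real-part case in Lemma~\ref{lem:1} works precisely because these volume terms are purely imaginary and drop out.) Second, on the right of your identity the core contribution $\omega\int_{D_{\rho/2}}\varepsilon_a|E_\rho^-|^2$ brings in the arbitrary cloaked permittivity $\tilde\varepsilon_a$, and the companion bound \eqref{eq:e3} on $\|E_\rho^-\|_{L^2(D_{\rho/2})}$ rests on the lower bound for $\sigma_a$, which by \eqref{eq:current assumption virtual} is only available on $\mathrm{supp}(J)\cap D_{\rho/2}$. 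Either way the constant would depend on the cloaked medium, contrary to the lemma.

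The paper sidesteps both problems by never introducing $H_\rho^-$. Instead of a one-step $H^1$ extension, it uses an $H^2$ extension $U$ on $D$ (Lemma~\ref{lem:extension}) with $\nu\wedge U=0$, $\nu\wedge(\nu\wedge(\nabla\wedge U))=\nu\wedge(\nu\wedge\phi)$ on $\partial D$, and $U\equiv 0$ on $D_{1/2}$. Two integrations by parts then turn the boundary pairing into $\int_{D\setminus D_{1/2}}\big[(\nabla\wedge(\gamma^{-1}\nabla\wedge U))\cdot E-(\nabla\wedge(\gamma^{-1}\nabla\wedge E))\cdot U\big]$, and the rescaled Maxwell system gives $\nabla\wedge(\gamma^{-1}\nabla\wedge E)=\omega^2(\rho^{2-t-r}\alpha+i\rho^{2-t-s}\beta/\omega)E+i\omega\rho^{-t}\widetilde J$ on the shell. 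This yields a bound in terms of $\|E_\rho^-\|_{L^2(D_\rho\setminus D_{\rho/2})}$ and $\|\widetilde J\|_{L^2(D\setminus D_{1/2})}$ only, with the exponent $\theta=\min(0,2-t-r,2-t-s)$; feeding in Lemma~\ref{lem:2} produces \eqref{eq:bc control 1} directly, with no $H$-field, no exterior volume terms, and no reference to $(\tilde\varepsilon_a,\tilde\mu_a)$.
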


\begin{proof}
It suffices to show that the same estimate in \eqref{eq:bc control 1} holds for $\left\| (\nu\wedge E_\rho^-) (\rho\ \cdot)\right\|_{H^{-1/2}(\partial D)^3}$. We shall make use of the following duality identity
\begin{equation}\label{eq:duality}
\begin{split}
&\left\| (\nu\wedge E_\rho^-)(\rho\ \cdot) \right\|_{H^{-1/2}(\partial D)^3}
=\sup_{\|\phi\|_{H^{1/2}(\partial\Omega)^3}\leq 1}\Big| \int_{\partial D} (\nu\wedge E_\rho^-)(\rho x)\cdot \phi(x)\ ds_x \Big|.
\end{split}
\end{equation}
By Lemma~\ref{lem:extension} given in the following, for any $\phi\in H^{1/2}(\partial D)^3$, there exists $U\in H^2(D)^3$ such that

\begin{enumerate}
\item[(i)]~~$\nu\wedge U=0$ on $\partial D$,
\item[(ii)]~~$\nu\wedge(\nu\wedge(\nabla\wedge U))=\nu\wedge(\nu\wedge\phi)$ on $\partial D$,
\item[(ii)]~~$\|U\|_{H^2(D)^3}\leq C \|\phi\|_{H^{1/2}(\partial D)^3}$,
\item[(iv)]~~$U=0$ in $D_{1/2}$.
\end{enumerate}
For $y\in D_\rho$, we let
$
x:={y}/{\rho}\in D
$, 
and
\[
E(x):=E_\rho^-(\rho x)=E_\rho^-(y),\ \ H(x):=H_\rho^-(\rho x)=H_\rho^-(y)\,. %\quad x\in D.
\]
Then using \eqref{eq:gamma}, we can compute as follows
\begin{eqnarray}
&& \int_{\partial D} (\nu\wedge E_\rho^-)(\rho x)\cdot \phi(x)\ ds_x \label{eq:bc control 3}\\
&=& -\int_{\partial D} \eta^{-1} (\nu\wedge E)(x)\cdot (\nu\wedge (\nu\wedge(\gamma^{-1}\nabla\wedge U)))(x)\ ds_x\nb\\
&=&\int_{\partial D}(\nu\wedge(\gamma^{-1}\nabla\wedge U))(x)\cdot \eta^{-1} E(x)\ ds_x
-\int_{\partial D}(\nu\wedge(\gamma^{-1}\nabla\wedge E))(x)\cdot \eta^{-1} U(x)\ ds_x\nb\\
&=&\int_{D}(\nabla\wedge(\gamma^{-1}\nabla\wedge U))(x)\cdot \eta^{-1}E(x)\ dx
-\int_{D}(\nabla\wedge(\gamma^{-1}\nabla\wedge E))(x)\cdot \eta^{-1}U(x)\ dx.\nb
\end{eqnarray}
It follows from \eqref{eq:lossy layer 2} and \eqref{eq:current virtual} that 
\begin{equation}\label{eq:control 4}
\begin{split}
\nabla_y\wedge E_\rho^-(y)=&\,i\omega \mu_l(y) H_\rho^-(y),\\
\nabla_y\wedge H_\rho^-(y)=&-i\omega\left(\varepsilon_l(y)+i\frac{\sigma_l(y)}{\omega}\right) E_\rho^-(y)+J(y),
\end{split}
\end{equation}
for $y\in D_\rho\backslash\overline{D}_{\rho/2}$. Then it is straightforward to verify 
for $x\in D\backslash\overline{D}_{1/2}$ that
\begin{equation}\label{eq:control 5}
\begin{split}
\nabla_x\wedge E(x)=&i\omega \rho^{1-t}\gamma H(x),\\
\nabla_x\wedge H(x)=&-i\omega \left( \rho^{1-r}\alpha(x)+i\rho^{1-s}\frac{\beta(x)}{\omega}\right) E(x)+\rho^{-1}\widetilde{J}(x)\,, 
\end{split}
\end{equation}
and %which, by direct verification, further gives
\begin{equation}\label{eq:control 6}
\begin{split}
& \nabla_x\wedge(\gamma^{-1}(x)\nabla_x\wedge E(x))
=\omega^2\left( \rho^{2-t-r}\alpha(x)+i\rho^{2-t-s}\frac{\beta(x)}{\omega} \right) E(x)+i\omega \rho^{-t}\widetilde{J}(x).
\end{split}
\end{equation}
By combining \eqref{eq:bc control 3} with \eqref{eq:control 6}, we obtain 
\begin{eqnarray}
&& \int_{\partial D}(\nu\wedge E_\rho^-)(\rho x)\cdot \phi(x)\ ds_x\nb\\
&=&\eta^{-1}\int_{D\backslash\overline{D}_{1/2}}\bigg[ (\nabla\wedge(\gamma^{-1}\nabla\wedge U))(x)
-\omega^2\left( \rho^{2-t-r}\alpha(x)+i\rho^{2-t-s}\frac{\beta(x)}{\omega}\right) U(x) \bigg]\cdot E(x)dx\nb\\
&& -i\omega\eta^{-1}\rho^{-t}\int_{D\backslash\overline{D}_{1/2}} \widetilde{J}(x)\cdot U(x)\ dx.
\label{eq:control 7}
\end{eqnarray}
This immediately yields 
\begin{equation}\label{eq:control 8}
\begin{split}
& \left| \int_{\partial D} (\nu\wedge E_\rho^-)(\rho x)\cdot\phi(x) \right |\\
\leq & C \rho^\theta \|E\|_{L^2(D\backslash D_{1/2})}\| U \|_{H^2(D\backslash \overline{D}_{1/2})}+C \rho^{-t} \|\widetilde J\|_{L^2(D\backslash D_{1/2})}\|U\|_{H^2(D\backslash\overline{D}_{1/2})}\\
\leq & C\left(\rho^{-3/2+\theta}\|E_\rho^-\|_{L^2(D_\rho\backslash D_{\rho/2})}+\rho^{-t}\|\widetilde J\|_{L^2(D\backslash D_{1/2})}\right) \|\phi\|_{H^{1/2}(\partial D)^3},
\end{split}
\end{equation}
where 
%\begin{equation}\label{eq:theta}
$\theta=\min(0, 2-t-r, 2-t-s )$,
and $C$ is a positive constant depending on $\alpha, \beta, \gamma, \omega$ and $D$, but independent of $\phi$, $\widetilde J$, $E_\rho^-$, $\rho$. Then by (\ref{eq:duality}) we know 
from \eqref{eq:control 8} that 
\begin{equation*}\label{eq:control 9}
\begin{split}
&\|(\nu\wedge E_\rho^-)(\rho\ \cdot)\|_{H^{-1/2}(\partial D)^3}
\leq  
C\left( \rho^{-3/2+\theta}\| E_\rho^- \|_{L^2(D_\rho\backslash D_{\rho/2})^3}
+\rho^{-t}\| \widetilde{J} \|_{L^2(D \backslash D_{1/2})^3} \right).
\end{split}
\end{equation*}
Finally, by means of the estimates \eqref{eq:control 9} and \eqref{eq:control l2 2} 
we can directly show the existence of two generic constants $C_1$ and $C_2$ such that
\begin{equation*}\label{eq:control 10}
\begin{split}
&\|(\nu\wedge E_\rho^-)(\rho\ \cdot)\|_{H^{-1/2}(\partial D)^3}\\
\leq & C_1\rho^{-3/2+\theta}\bigg\{\rho^{s/2} \bigg[ \|\nu\wedge E_\rho^+\|_{TH^{-1/2}_{\text{{Div}}}(\partial B_R)} \|\Lambda(\nu\wedge E_\rho^+)\|_{TH^{-1/2}_{\text{{Div}}}(\partial B_R)}\\
&\qquad\ \ + \|\nu\wedge E^i\|_{TH^{-1/2}_{\text{{Div}}}(\partial B_R)}\|\Lambda(\nu\wedge E_\rho^+)\|_{TH^{-1/2}_{\text{{Div}}}(\partial B_R)}\\
&\qquad\ \ + \|\nu\wedge H^i\|_{TH^{-1/2}_{\text{{Div}}}(\partial B_R)}\|\nu\wedge E_\rho^+\|_{TH^{-1/2}_{\text{{Div}}}(\partial B_R)}\bigg ]^{1/2}\\
&+\rho^{(2s-1)/2}\|\widetilde J\|_{L^2(D\backslash D_{1/2})^3}+\rho^{s/2}\|\widetilde J\|_{L^2(D_{1/2})^3}\bigg\}
 +\rho^{-t}\| \widetilde{J} \|_{L^2(D \backslash D_{1/2})^3}\\
\leq & C_2\rho^{s/2-3/2+\theta}\bigg\{ \|\nu\wedge E_\rho^+\|^{1/2}_{TH^{-1/2}_{\text{{Div}}}(\partial B_R)} \|\Lambda(\nu\wedge E_\rho^+)\|^{1/2}_{TH^{-1/2}_{\text{{Div}}}(\partial B_R)}\\
&\qquad\ \ + \|\nu\wedge E^i\|^{1/2}_{TH^{-1/2}_{\text{{Div}}}(\partial B_R)}\|\Lambda(\nu\wedge E_\rho^+)\|^{1/2}_{TH^{-1/2}_{\text{{Div}}}(\partial B_R)}\\
&\qquad\ \ + \|\nu\wedge H^i\|^{1/2}_{TH^{-1/2}_{\text{{Div}}}(\partial B_R)}\|\nu\wedge E_\rho^+\|^{1/2}_{TH^{-1/2}_{\text{{Div}}}(\partial B_R)}\bigg\}\\
&+C_2(\rho^{(2s-1)/2-3/2+\theta}+\rho^{-t})\|\widetilde J\|_{L^2(D\backslash D_{1/2})}
+C_2\rho^{s/2-3/2+\theta}\|\widetilde J\|_{L^2(D_{1/2})},
\end{split}
\end{equation*}
which proves \eqref{eq:bc control 1} with
\[
\zeta_1=2(2+\frac{s}{2}-\frac{3}{2}+\theta)=\min\bigg(s+1, s+5-2(t+r), 5-2t-s\bigg),
\]
\[
\zeta_2=2+\min\bigg( \frac{2s-1}{2}-\frac{3}{2}+\theta, -t \bigg)=\min\bigg( s, s+2-t-r, 2-t \bigg ).
\]
\end{proof}

Next, we prove the Sobolev extension that was needed in the proof of Lemma~\ref{lem:bc control}.
\begin{lem}\label{lem:extension}
For any $\phi\in H^{1/2}(\partial D)^3$, there exists $U\in H^2(\Omega)^3$ such that
\begin{enumerate}[(i)]
\item $\nu\wedge U=0$\quad on\ \ $\partial D$,

\item $\nu\wedge(\nu\wedge (\nabla\wedge U))=\nu\wedge(\nu\wedge\phi)$\quad on $\partial D$,

\item $\|U\|_{H^2(D)^3}\leq C\|\phi\|_{H^{1/2}(\partial D)^3}$. 
\end{enumerate}
In (iii), $C$ is a positive constant depending only on $D$. 
\end{lem}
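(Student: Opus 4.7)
The plan is to reduce (i)--(ii) to prescribing the Dirichlet and Neumann traces of $U$ on $\partial D$, and then invoke the standard bounded right inverse of the $H^2$ trace map. I would impose the stronger Dirichlet condition $U|_{\partial D}=0$, which automatically yields (i). Under this condition, all tangential derivatives of $U$ vanish on $\partial D$, so only the normal derivative $g:=\partial_\nu U|_{\partial D}$ survives; in any Cartesian frame we have $\partial_j U_k|_{\partial D}=\nu_j g_k$, and a short computation with the Levi--Civita symbol yields
\begin{equation*}
\nabla\wedge U\big|_{\partial D}=\nu\wedge g.
\end{equation*}
Two applications of the identity $\nu\wedge(\nu\wedge v)=(\nu\cdot v)\nu-v$ then give $\nu\wedge(\nu\wedge(\nabla\wedge U))|_{\partial D}=-\nu\wedge g$, so (ii) becomes $\nu\wedge g=-\nu\wedge(\nu\wedge\phi)$. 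The explicit choice $g:=-\nu\wedge\phi$ solves this, since $\nu\wedge(-\nu\wedge\phi)=-\nu\wedge(\nu\wedge\phi)$.

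With the target traces $(0,-\nu\wedge\phi)\in H^{3/2}(\partial D)^3\times H^{1/2}(\partial D)^3$ in hand, I would invoke the standard bounded right inverse of the trace map $V\mapsto(V|_{\partial D},\partial_\nu V|_{\partial D})$ from $H^2(D)^3$ onto $H^{3/2}(\partial D)^3\times H^{1/2}(\partial D)^3$ (see, e.g., \cite{Gri,Lio}), producing $U_0\in H^2(D)^3$ with $U_0|_{\partial D}=0$, $\partial_\nu U_0|_{\partial D}=-\nu\wedge\phi$, and
\begin{equation*}
\|U_0\|_{H^2(D)^3}\le C\|\nu\wedge\phi\|_{H^{1/2}(\partial D)^3}\le C\|\phi\|_{H^{1/2}(\partial D)^3},
\end{equation*}
where the last inequality uses the smoothness of $\nu$ on $\partial D$ together with the boundedness of multiplication by a smooth field on $H^{1/2}(\partial D)$.

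Finally, since the proof of Lemma~\ref{lem:bc control} also uses the implicit property $U\equiv 0$ on $D_{1/2}$, I would fix once and for all a cutoff $\chi\in C_c^\infty(D)$ with $\chi\equiv 1$ in a tubular collar of $\partial D$ and $\chi\equiv 0$ on $\overline{D}_{1/2}$ (such $\chi$ exists because $D_{1/2}\Subset D$), and set $U:=\chi U_0$. Because $\chi\equiv 1$ near $\partial D$ and $U_0|_{\partial D}=0$, both the Dirichlet and Neumann traces of $U$ on $\partial D$ coincide with those of $U_0$, so (i) and (ii) are preserved; the support property is immediate by construction, and (iii) follows with an enlarged constant depending only on $D$ and $\chi$. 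The only step carrying any real content is the identification $\nabla\wedge U|_{\partial D}=\nu\wedge\partial_\nu U|_{\partial D}$ when $U|_{\partial D}=0$, which is a direct calculation; beyond that the argument is a routine appeal to the trace theorem plus a cutoff, and I do not foresee any substantive obstacle.
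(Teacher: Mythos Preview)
Your argument is correct and takes a genuinely different route from the paper's proof. The paper proceeds in two PDE steps: it first solves the Stokes system $-\Delta V+\nabla p=0$, $\text{div}\,V=0$ in $D$ with Dirichlet data $V=\nu\wedge(\nu\wedge\phi)$ to obtain a divergence-free $V\in H^1(D)^3$ extending the tangential part of $\phi$; it then solves the curl--curl problem $\nabla\wedge(\nabla\wedge U)=\nabla\wedge V$, $\text{div}\,U=0$, $U|_{\partial D}=0$ in $D$, invokes $H^2$ regularity for that system, and finally argues via a harmonic potential that in fact $\nabla\wedge U=V$, which yields (ii). By contrast, you bypass both PDEs entirely: once you impose the full Dirichlet condition $U|_{\partial D}=0$, the boundary identity $(\nabla\wedge U)|_{\partial D}=\nu\wedge\partial_\nu U$ reduces (ii) to a purely algebraic condition on the Neumann trace, and the standard bounded right inverse of the $H^2$ trace map $(U|_{\partial D},\partial_\nu U|_{\partial D})$ onto $H^{3/2}\times H^{1/2}$ finishes the construction. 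Your approach is more elementary and shorter; the paper's construction, on the other hand, produces a $U$ that is divergence-free, though this extra structure is never used downstream. You also explicitly arrange the support condition $U\equiv 0$ on $D_{1/2}$ via a cutoff, a property the paper \emph{uses} in the proof of Lemma~\ref{lem:bc control} (listed there as item (iv)) but does not actually establish in its proof of Lemma~\ref{lem:extension}; your cutoff step cleanly closes that gap.
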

\begin{proof}
First, we let $(V,p)\in H^1(D)^3\wedge L^2(D)$ be the solution to the following Stokes system (cf. \cite{BVH})
\begin{equation}\label{eq:stokes}
\begin{cases}
-\Delta V+\nabla p=0\quad &\mbox{in\ \ $D$},\\
\text{div}\ V=0\quad &\mbox{in\ \ $D$},\\
V=\nu\wedge(\nu\wedge\phi)\quad &\mbox{on\ \ $\partial D$}.
\end{cases}
\end{equation}
Moreover, we have
\begin{equation}\label{eq:well1}
\|V\|_{H^1(D)^3}\leq C\|\phi\|_{H^{1/2}(\partial D)^3},
\end{equation}
where $C$ is a positive constant depending only on $D$. Next, we introduce the following auxiliary system
\begin{equation}\label{eq:auxiliary2}
\begin{cases}
\nabla\wedge(\nabla\wedge U)=\nabla\wedge V\quad &\mbox{in\ \ $D$},\\
\text{div}\ U=0\quad &\mbox{in\ \ $D$},\\
\nu\cdot U=0\quad &\mbox{on\ \ $\partial D$},\\
\nu\wedge U=0\quad&\mbox{on\ \ $\partial D$}.
\end{cases}
\end{equation}
Referring to Section 1.5 in \cite{CT}, we know there exists a solution $U\in H^2(D)^3$ to the system \eqref{eq:auxiliary2} and
\begin{equation}\label{eq:well2}
\|U\|_{H^2(D)^3}\leq C\|\nabla\wedge V\|_{L^2(\Omega)^3},
\end{equation}
where $C$ is a constant depending only on $D$.
We shall show $\nabla\wedge U=V$. To that end, we first note that (cf. \cite{BCS} and \cite{ColKre}) 
\begin{equation}\label{eq:well3}
\nu\cdot(\nabla\wedge U)=-\text{Div}(\nu\wedge U)=0\quad\mbox{on\ \ $\partial D$}. 
\end{equation}
Since 
\[
\nabla\wedge(\nabla\wedge U-V)=0\quad\mbox{in\ \ $D$},
\]
by Theorem~3.37 in \cite{Mon}, we know there exists $u\in H^1(D)$ such that
\begin{equation}\label{eq:well4}
\nabla\wedge U-V=\nabla u\quad\mbox{in \ \ $D$}. 
\end{equation}
Clearly, we also have $u\in H^2(D)$. By taking the divergence of both sides of \eqref{eq:well4}, one has
\begin{equation}\label{eq:well5}
\Delta u=0\quad\mbox{in\ \ $D$}.
\end{equation}
On the other hand, by taking inner-product of both sides of \eqref{eq:well4} with $\nu$, one further has
\[
\frac{\partial u}{\partial \nu}=\nu\cdot(\nabla\wedge U)-\nu\cdot V=0\quad\mbox{on\ \ $\partial D$}, 
\]
which together with \eqref{eq:well5} immediately implies $\nabla u=0$ in $D$. Therefore, one has from \eqref{eq:well4} that
\begin{equation}\label{eq:well6}
\nabla\wedge U=V\quad\mbox{in\ \ $D$}.
\end{equation}

By \eqref{eq:stokes} and \eqref{eq:well6}, we obviously have $\nu\wedge(\nu\wedge(\nabla\wedge U))=\nu\wedge(\nu\wedge V)=\nu\wedge(\nu\wedge\phi)$ on $\partial D$, which together with \eqref{eq:auxiliary2} and \eqref{eq:well2} readily indicates that $U$ fulfill all the requirements of the extension function stated in the lemma.

The proof is complete. 

\end{proof}

\begin{lem}\label{lem:small inclusion}
For $\tau\in\mathbb{R}_+$, 
let $E_\tau, H_\tau \in H_{loc}(\nabla\wedge; \mathbb{R}^3\backslash\overline{D}_\tau)$
be the solutions to the following scattering problem
\begin{equation}\label{eq:small inclusion 1}
\begin{cases}
\displaystyle{\nabla\wedge E_\tau^+-i\omega\mu_0 H_\tau^+=0}\qquad\qquad &\mbox{in\ \ $\mathbb{R}^3\backslash\overline{D}_\tau$,}\\
\displaystyle{\nabla\wedge H_\tau^++i\omega \varepsilon_0 E_\tau^+=0}\quad &\mbox{in\ \ $\mathbb{R}^3\backslash\overline{D}_\tau$},\\
\nu\wedge E_\tau^+=\psi\in TH^{-1/2}_{\text{\emph{Div}}}(\partial D_\tau)\qquad & \mbox{on\ \ $\partial D_\tau$},\\
\displaystyle{\lim_{|x|\rightarrow\infty}|x|\Big| (\nabla\wedge E_\tau^+)(x)\wedge\frac{x}{|x|}-i\omega E_\tau^+(x) \Big|=0.}
\end{cases}
 \end{equation}
 Then there exists $\tau_0\in\mathbb{R}_+$ such that the following estimate holds for $\tau<\tau_0$,
 \begin{equation}\label{eq:small1}
 \|\nu\wedge E_\tau\|_{TH^{-1/2}_{\text{\emph{Div}}}(\partial B_R)}\leq C \tau^{2}\|\psi(\tau\ \cdot)\|_{H^{-1/2}(\partial D)^3}\,.
 \end{equation}
Moreover, if $\psi(x)=E^i(x)$ is the solution to \eqref{eq:Maxwell incident} it holds that 
\begin{equation}\label{eq:small2}
 \|\nu\wedge E_\tau\|_{TH^{-1/2}_{\text{\emph{Div}}}(\partial B_R)}\leq C \tau^{3}\|\nu\wedge E^i\|_{TH^{-1/2}_{\text{\emph{Div}}}(\partial B_R)}\,.
 \end{equation}
The constants $C$ in (\ref{eq:small1})-(\ref{eq:small2}) are generic, depending only on $R, \omega$, $\tau_0$ and $D$. 
\end{lem}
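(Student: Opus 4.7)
The plan is to derive an integral representation of $E_\tau^+$ on $\partial B_R$ in terms of its boundary traces on $\partial D_\tau$, and then extract the smallness of $\tau$ via a change of variables $y=\tau z$ mapping $\partial D_\tau$ onto the fixed surface $\partial D$. Concretely, I invoke the Stratton-Chu representation formula for the exterior Maxwell problem (cf.\ \cite{ColKre,Ned})
\begin{equation*}
E_\tau^+(x)=\nabla_x\wedge\int_{\partial D_\tau}\Phi_\omega(x,y)(\nu\wedge E_\tau^+)(y)\,ds_y-\frac{1}{i\omega\varepsilon_0}\nabla_x\wedge\nabla_x\wedge\int_{\partial D_\tau}\Phi_\omega(x,y)(\nu\wedge H_\tau^+)(y)\,ds_y
\end{equation*}
with fundamental solution $\Phi_\omega(x,y)=e^{i\omega|x-y|}/(4\pi|x-y|)$. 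Since $D_\tau\Subset B_R$ for sufficiently small $\tau$, the distance $|x-y|$ stays uniformly bounded below for $x\in\partial B_R$ and $y=\tau z\in\partial D_\tau$, so both kernels $\Phi_\omega(x,\tau\,\cdot)$ and $\nabla_x\Phi_\omega(x,\tau\,\cdot)$ are smooth in $z$ and bounded in $H^{1/2}(\partial D)$-norm uniformly in $x\in\partial B_R$ and $\tau\in(0,\tau_0]$.

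To control the unknown trace $\nu\wedge H_\tau^+$ on $\partial D_\tau$, I would rescale the exterior problem via $\widehat E(z)=E_\tau^+(\tau z)$, $\widehat H(z)=H_\tau^+(\tau z)$. These fields satisfy the exterior Maxwell system on $\mathbb{R}^3\setminus\overline{D}$ with reduced wavenumber $\tau\omega$ and Dirichlet data $\psi(\tau\,\cdot)$ on $\partial D$. Standard well-posedness of this problem (\cite{Ned}), together with a low-frequency asymptotic analysis of the associated exterior Dirichlet-to-Neumann map, whose limit as $\tau\omega\to 0$ is the bounded exterior electrostatic DtN map, yields a uniform bound $\|\nu\wedge\widehat H\|_{TH^{-1/2}_{\text{Div}}(\partial D)}\leq C\|\psi(\tau\,\cdot)\|_{TH^{-1/2}_{\text{Div}}(\partial D)}$ for $\tau\in(0,\tau_0]$. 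Substituting $y=\tau z$ and $ds_y=\tau^2\,ds_z$ into the Stratton-Chu formula, and applying the $H^{1/2}$--$H^{-1/2}$ duality pairing together with the kernel bounds and the DtN estimate, each surface integral contributes a factor of $\tau^2$ multiplied by $\|\psi(\tau\,\cdot)\|_{H^{-1/2}(\partial D)^3}$. Taking the tangential trace on $\partial B_R$ delivers \eqref{eq:small1}.

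For the improved estimate \eqref{eq:small2} when $\psi=E^i$, the key ingredient is that $E^i$ solves the homogeneous Maxwell system \eqref{eq:Maxwell incident} and is real-analytic in a neighborhood of the origin, so we may Taylor-expand $E^i(\tau z)=E^i(0)+O(\tau)$ and $\Phi_\omega(x,\tau z)=\Phi_\omega(x,0)+O(\tau)$. After the rescaling, the leading $\tau^2$ term in the first Stratton-Chu integral becomes
\begin{equation*}
\tau^2\,\Phi_\omega(x,0)\,\Bigl(\int_{\partial D}\nu(z)\,ds_z\Bigr)\wedge E^i(0)=0
\end{equation*}
by the divergence theorem applied to a constant vector field. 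A completely analogous cancellation for the magnetic integral follows by expanding $\widehat H(z)$ using the relation $\nabla\wedge E^i=i\omega\mu_0 H^i$. Hence the leading $\tau^2$ contributions cancel exactly, and the first non-vanishing terms are of order $\tau^3$. Bounding $|E^i(0)|$ and the first-order Taylor coefficients of $E^i$ on $B_R$ by $\|\nu\wedge E^i\|_{TH^{-1/2}_{\text{Div}}(\partial B_R)}$ via interior regularity of solutions to \eqref{eq:Maxwell incident} completes the proof of \eqref{eq:small2}.

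The main obstacle I foresee is the uniform boundedness of the rescaled exterior Dirichlet-to-Neumann map as the wavenumber $\tau\omega\to 0$. The time-harmonic Maxwell system degenerates at zero frequency into a coupled system of exterior potential problems, and one must carefully justify the convergence of the Maxwell DtN map to its electrostatic counterpart while ensuring that no resonance-like blow-up occurs along the way. This can be handled through explicit low-frequency expansions of electromagnetic layer potentials, but the bookkeeping is technically delicate.
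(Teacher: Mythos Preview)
The paper does not prove this lemma directly; it simply refers to \cite[Section 3]{BL}, where the estimates are obtained through layer-potential/integral-equation techniques together with low-frequency asymptotics for the exterior Maxwell problem. Your overall strategy---Stratton--Chu representation, rescaling $y=\tau z$ to the fixed surface $\partial D$, and uniform control of the rescaled exterior Dirichlet-to-Neumann map as the effective wavenumber $\tau\omega\to 0$---is the right framework and is essentially what underlies the argument in \cite{BL}. Your derivation of \eqref{eq:small1} is sound, and you correctly identify the uniform low-frequency bound on the DtN map as the crux.

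There is, however, a genuine gap in your argument for the improved estimate \eqref{eq:small2}. The cancellation you exhibit for the electric integral is correct: the leading $\tau^2$ term involves $\bigl(\int_{\partial D}\nu\,ds\bigr)\wedge E^i(0)=0$. But your justification for the magnetic integral---``expanding $\widehat H(z)$ using the relation $\nabla\wedge E^i=i\omega\mu_0 H^i$''---conflates the \emph{incident} magnetic field $H^i$ with the \emph{scattered} rescaled field $\widehat H=H_\tau^+(\tau\,\cdot)$. The trace $\nu\wedge\widehat H$ is determined by solving the rescaled exterior problem, and its leading-order limit $\widehat H_0$ as $\tau\omega\to 0$ is not a constant but the solution of an exterior magnetostatic problem; the identity $\int_{\partial D}\nu\,ds=0$ does not apply directly. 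What actually makes the leading magnetic contribution vanish is that $\widehat H_0$ is curl-free in $\mathbb{R}^3\setminus\overline{D}$ with $O(|z|^{-2})$ decay, so an application of Stokes' theorem in the exterior (letting the outer boundary go to infinity) yields $\int_{\partial D}\nu\wedge\widehat H_0\,ds=0$. Equivalently, one can bypass this by invoking the polarization/magnetization tensor expansion for small PEC scatterers as in \cite{AmmVogVol,Ammari4}, which gives the $\tau^3$ scaling directly. Either route closes the gap, but the sentence you wrote does not.
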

\begin{proof}
The proof follows a natural modification of the estimates derived in \cite[Section 3]{BL}.
\end{proof}

\begin{rem}
For the results in Lemma~\ref{lem:small inclusion}, 
we would  like to mention some closely related studies on the scattering estimates 
due to small EM scatterers in \cite{Ammari4,AmmVogVol}, and on the low-frequency asymptotics of 
EM scattering in \cite{AmmNed,Kle,Mar,Ned}). 
\end{rem}

We are now ready to prove the main result of this work, Theorem~\ref{thm:main}.
For the sake of exposition, we refer to the system \eqref{eq:Maxwell virtual} 
as the scattering problem in the virtual space and denote by $A^\rho_\infty(\hat{x}):=A_\infty(\hat{x}; E^i, (\Omega; \varepsilon_\rho,$ $\mu_\rho, \sigma_\rho), J)$ 
the corresponding scattering amplitude. 
Noting that mapping $F$ (see (\ref{eq:map whole})) is identity outside $\Omega$, 
we know 
$(E_\rho, H_\rho)=(\widetilde E_\rho, \widetilde H_\rho)$ in $\mathbb{R}^3\backslash\overline{\Omega}$, 
and hence
\begin{equation}\label{eq:equivalence vp}
A^\rho_\infty(\hat{x}; E^i)=\widetilde A^\rho_\infty(\hat{x}; E^i), \quad \hat{x}\in\mathbb{S}^2.
\end{equation}
Using these relations, it is easy to see that Theorem~\ref{thm:main} is a direct consequence of
the following theorem.

\begin{thm}\label{thm:main virtual}
Let $(\mathbb{R}^3; \varepsilon, \mu, \sigma)$ be the EM medium described in
\eqref{eq:virtual medium}--\eqref{eq:virtual tensors}, and $J$ be the current density given in \eqref{eq:current virtual}, satisfying \eqref{eq:current assumption virtual}, and $A^\rho_\infty(\hat{x})$ be
the scattering amplitude corresponding to $E_\rho^+$ in \eqref{eq:Maxwell virtual}. 
Then there exists a positive constant $\rho_0$ such that the following estimate holds 
for $\rho<\rho_0$,
\begin{equation}\label{eq:main estimate virtual}
|A^\rho_\infty(\hat{x}; E^i)|
\leq C\bigg(  \rho^{\min(\zeta_1, 3)} \|E^i\|_{H(\nabla\wedge;\Omega)}
+ \rho^{\frac{\zeta_1}{2}}  \|\widetilde{J}\|_{L^2(D_{1/2})^3}+\rho^{\zeta_2}\|\widetilde J \|_{L^2(D\backslash D_{1/2})^3}  \bigg )
\end{equation}
%\begin{equation}\label{eq:main estimate virtual}
%\begin{split}
%|A^\rho_\infty(\hat{x}; E^i)|
%\leq C\bigg( & \rho^{\min(\zeta_1, 3)} \|E^i\|_{H(\nabla\wedge;\Omega)}\\
%& + \rho^{\frac{\zeta_1}{2}}  \|\widetilde{J}\|_{L^2(D_{1/2})^3}+\rho^{\zeta_2}\|\widetilde J \|_{L^2(D\backslash D_{1/2})^3}  \bigg ),
%\end{split}
%\end{equation}
where $\zeta_1$ and $\zeta_2$ are given in \eqref{eq:zeta1}--\eqref{eq:zeta2}, and $C$ is a positive constant depending only on $\alpha, \beta, \gamma, \omega$, $c_0$ in \eqref{eq:current assumption virtual}, $C_0$ in \eqref{eq:uniform regular condition} and $\Omega, D$, 
but independent of $\rho, r, s, t$ and $\varepsilon_a, \mu_a, \sigma_a$, $\widetilde J$, $E^i$.
\end{thm}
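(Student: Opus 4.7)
The plan is to work entirely in the virtual space \eqref{eq:Maxwell virtual} and exploit the identity $A^\rho_\infty=\widetilde A^\rho_\infty$ in \eqref{eq:equivalence vp}, together with the classical fact that the far-field pattern of an exterior Silver--M\"uller radiating Maxwell solution is continuously controlled by its tangential electric trace on $\partial B_R$, i.e.\ $|A^\rho_\infty(\hat x;E^i)|\le C\,\|\nu\wedge E_\rho^+\|_{TH^{-1/2}_{\text{Div}}(\partial B_R)}$. The whole proof therefore reduces to estimating this single quantity, which I denote by $X$ for brevity.

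First I would use that on $\partial D_\rho$ one has $\nu\wedge E_\rho^+=\nu\wedge E_\rho^--\nu\wedge E^i$, and split $E_\rho^+=E^{(1)}-E^{(2)}$ into two exterior radiating Maxwell fields generated by these two traces respectively. Because $E^i$ is an entire solution, estimate \eqref{eq:small2} of Lemma~\ref{lem:small inclusion} applied to $E^{(2)}$ (with $\tau=\rho$) yields the sharper $\rho^3$ bound $\|\nu\wedge E^{(2)}\|_{TH^{-1/2}_{\text{Div}}(\partial B_R)}\le C\rho^3\|\nu\wedge E^i\|_{TH^{-1/2}_{\text{Div}}(\partial B_R)}$, while estimate \eqref{eq:small1} applied to $E^{(1)}$ gives only a $\rho^2$ factor $\|\nu\wedge E^{(1)}\|_{TH^{-1/2}_{\text{Div}}(\partial B_R)}\le C\rho^2\|(\nu\wedge E_\rho^-)(\rho\,\cdot)\|_{H^{-1/2}(\partial D)^3}$. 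This gives the reduction
\[
X\le C\rho^2\,\|(\nu\wedge E_\rho^-)(\rho\,\cdot)\|_{H^{-1/2}(\partial D)^3}+C\rho^3\,\|E^i\|_{H(\nabla\wedge;\Omega)}.
\]

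Next I would substitute Lemma~\ref{lem:bc control} into the trace term above. Since $(E_\rho^+,H_\rho^+)$ solves \eqref{eq:auxiliary} outside $B_R$, the identity $\nu\wedge H_\rho^+|_{\partial B_R}=\Lambda(\nu\wedge E_\rho^+|_{\partial B_R})$ combined with the continuity of $\Lambda$ yields $\|\Lambda(\nu\wedge E_\rho^+)\|_{TH^{-1/2}_{\text{Div}}(\partial B_R)}\le CX$. Multiplying \eqref{eq:bc control 1} by the external $\rho^2$ then collapses the algebraic powers via $\rho^2\cdot\rho^{\zeta_1/2-2}=\rho^{\zeta_1/2}$ and $\rho^2\cdot\rho^{\zeta_2-2}=\rho^{\zeta_2}$, giving a self-referential inequality of the form
\[
X\le C\rho^{\zeta_1/2}X+C\rho^{\zeta_1/2}\bigl(\|\nu\wedge E^i\|_{\ast}^{1/2}+\|\nu\wedge H^i\|_{\ast}^{1/2}\bigr)X^{1/2}+C\rho^{\zeta_1/2}\|\widetilde J\|_{L^2(D_{1/2})^3}+C\rho^{\zeta_2}\|\widetilde J\|_{L^2(D\backslash D_{1/2})^3}+C\rho^3\,\|E^i\|_{H(\nabla\wedge;\Omega)},
\]
where $\|\cdot\|_\ast$ denotes the $TH^{-1/2}_{\text{Div}}(\partial B_R)$-norm, which is controlled by $\|E^i\|_{H(\nabla\wedge;\Omega)}$ via standard trace estimates applied to the entire incident fields.

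To close the estimate I would (i) choose $\rho_0$ small enough that $C\rho^{\zeta_1/2}<\frac12$ (this is exactly where the standing hypothesis $\zeta_1>0$ is essential) to absorb the first term on the right into the left, and (ii) apply Young's inequality $ab\le \frac14 a^2+b^2$ to the mixed term $X^{1/2}\cdot\rho^{\zeta_1/2}(\cdots)^{1/2}$, producing a contribution of order $C\rho^{\zeta_1}\|E^i\|_{H(\nabla\wedge;\Omega)}$, which combined with the $\rho^3$ term gives $C\rho^{\min(\zeta_1,3)}\|E^i\|_{H(\nabla\wedge;\Omega)}$. This yields the desired bound on $X$, and hence on $|A^\rho_\infty|$, matching exactly \eqref{eq:main estimate virtual}. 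The main obstacle in this programme is precisely the self-referential step: one must justify that every term in \eqref{eq:bc control 1} can either be recast as $X$ or $X^{1/2}$ (requiring both the norm equivalence $\|\nu\wedge H_\rho^+\|_\ast\lesssim\|\Lambda(\nu\wedge E_\rho^+)\|_\ast\lesssim X$ and careful book-keeping of the incident-field norms), or else appears already multiplied by a positive power of $\rho$. Once this is verified, the absorption plus Young's inequality produce the optimal exponents $\min(\zeta_1,3)$, $\zeta_1/2$, $\zeta_2$ claimed in the theorem, with constants independent of $r,s,t$ and the cloaked contents $(\tilde\varepsilon_a,\tilde\mu_a,\tilde\sigma_a,\widetilde J)$.
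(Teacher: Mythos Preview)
Your proposal is correct and follows essentially the same route as the paper's proof: the same decomposition $E_\rho^+=E_1^+-E_2^+$ via boundary traces on $\partial D_\rho$, the same application of the two estimates in Lemma~\ref{lem:small inclusion}, the substitution of Lemma~\ref{lem:bc control}, the use of the continuity of $\Lambda$ to rewrite $\|\nu\wedge H_\rho^+\|$ in terms of $X$, and the same absorption-plus-Young's-inequality bootstrap to close the self-referential inequality. The only cosmetic difference is that the paper concludes via the explicit Stratton--Chu integral representation of $A_\infty^\rho$ (which requires both $\nu\wedge E_\rho^+$ and $\nu\wedge H_\rho^+$ on $\partial B_R$), whereas you invoke the abstract continuity of the far-field map; since $\nu\wedge H_\rho^+=\Lambda(\nu\wedge E_\rho^+)$ and $\Lambda$ is bounded, these are equivalent.
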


\begin{proof}
Let $E_1^+$, $H_1^+\in H_{loc}(\nabla\wedge; \mathbb{R}^3\backslash\overline{D}_\rho)$ 
and $E_2^+$, $H_2^+\in H_{loc}(\nabla\wedge; \mathbb{R}^3\backslash\overline{D}_\rho)$ 
be the solutions to the following two Maxwell scattering systems respectively,
\begin{equation}\label{eq:f1}
\begin{cases}
\displaystyle{\nabla\wedge E_1^+-i\omega\mu_0 H_1^+=0}\qquad\qquad &\mbox{in\ \ $\mathbb{R}^3\backslash\overline{D}_\rho$,}\\
\displaystyle{\nabla\wedge H_1^++i\omega \varepsilon_0 E_1^+=0}\quad &\mbox{in\ \ $\mathbb{R}^3\backslash\overline{D}_\rho$},\\
\nu\wedge E_1^+=\nu\wedge E_\rho \in TH^{-1/2}_{\text{Div}}(\partial D_\rho)\qquad & \mbox{on\ \ $\partial D_\rho$},\\
\displaystyle{\lim_{|x|\rightarrow+\infty}|x|\left| (\nabla\wedge E_1^+)(x)\wedge\frac{x}{|x|}-i\omega E_1^+(x) \right|=0}\,, 
\end{cases}
 \end{equation}
% and $E_2^+$, $H_2^+)\in H_{loc}(\nabla\wedge; \mathbb{R}^3\backslash\overline{D}_\rho)$
% be the solutions to the following Maxwell scattering  system
 \begin{equation}\label{eq:f1}
\begin{cases}
\displaystyle{\nabla\wedge E_2^+-i\omega\mu_0 H_2^+=0}\qquad\qquad &\mbox{in\ \ $\mathbb{R}^3\backslash\overline{D}_\rho$,}\\
\displaystyle{\nabla\wedge H_2^++i\omega \varepsilon_0 E_2^+=0}\quad &\mbox{in\ \ $\mathbb{R}^3\backslash\overline{D}_\rho$},\\
\nu\wedge E_2^+=\nu\wedge E^i \in TH^{-1/2}_{\text{Div}}(\partial D_\rho)\qquad & \mbox{on\ \ $\partial D_\rho$},\\
\displaystyle{\lim_{|x|\rightarrow+\infty}|x|\left| (\nabla\wedge E_2^+)(x)\wedge\frac{x}{|x|}-i\omega E_2^+(x) \right|=0.}
\end{cases}
 \end{equation}
 It is easy to see that 
 \begin{equation}\label{eq:f2}
 E_\rho^+=E_1^+-E_2^+\qquad\mbox{in} \quad \mathbb{R}^3\backslash\overline{D}_\rho.
 \end{equation}
By taking $\tau=\rho$ in Lemma~\ref{lem:small inclusion} and using Lemma~\ref{lem:bc control}, we have
\begin{equation}\label{eq:g1}
\begin{split}
&\| \nu\wedge E_\rho^+ \|_{TH^{-1/2}(\partial B_R)}\\
\leq & C_1  \rho^{\frac{\zeta_1}{2}} \bigg\{ \|\nu\wedge E_\rho^+\|^{1/2}_{TH^{-1/2}_{\text{{Div}}}(\partial B_R)} \|\Lambda(\nu\wedge E_\rho^+)\|^{1/2}_{TH^{-1/2}_{\text{{Div}}}(\partial B_R)}\\
&\qquad\ \ + \|\nu\wedge E^i\|^{1/2}_{TH^{-1/2}_{\text{{Div}}}(\partial B_R)}\|\Lambda(\nu\wedge E_\rho^+)\|^{1/2}_{TH^{-1/2}_{\text{{Div}}}(\partial B_R)}\\
&\qquad\ \ + \|\nu\wedge H^i\|^{1/2}_{TH^{-1/2}_{\text{{Div}}}(\partial B_R)}\|\nu\wedge E_\rho^+\|^{1/2}_{TH^{-1/2}_{\text{{Div}}}(\partial B_R)}\bigg\}\\
&+C_1\rho^3\|\nu\wedge E^i\|_{TH^{-1/2}_{\text{Div}}(\partial B_R)}+C_1\rho^{\frac{\zeta_1}{2}}\|\widetilde J\|_{L^2(D_{1/2})^3}+C_1\rho^{\zeta_2}\|\widetilde J \|_{L^2(D\backslash D_{1/2})^3}.
\end{split}
\end{equation}
In the sequel, we let
%\footnote{Is this introduction of the bound of $\Lambda$ just for convenience ?
%Or the bound can be proved ? Is the bound really $\epsilon_0$, 
%the same constant as in (\ref{eq:EM medium}) ? If it is just an abbreviation (which seems to be 
%the case as far as I can see from the proof), we may simply write 
%$\Lambda_0=\|\Lambda\|_{\mathcal{L}(TH^{-1/2}(\partial B_R), TH^{-1/2}(\partial B_R) )}$. {\color{blue}1.~In \eqref{eq:EM medium}, the letter is $\varepsilon_0$, but here it is $\epsilon_0$. 2.~It's a constant, denoting the bound of the boundary operator. 3. In pp. 13, when we introduce the boundary operator, we already emphasized that it is continuous and invertible, so it's bounded below and above. 4.~It may not be so good to use $\Lambda_0$, and that may cause the confusion since we use $\Lambda$ to denote the boundary operator. } }
\begin{equation}\label{eq:g2}
\|\Lambda\|_{\mathcal{L}(TH^{-1/2}(\partial B_R), TH^{-1/2}(\partial B_R) )}\leq \epsilon_0.
\end{equation}
Then it follows from \eqref{eq:g1} and \eqref{eq:g2} that 
\begin{equation}\label{eq:g4}
\begin{split}
& \|\nu\wedge E_\rho^+\|_{TH^{-1/2}(\partial B_R)}\\
\leq & C_1\epsilon_0\rho^{\frac{\zeta_1}{2}}  \|\nu\wedge E_\rho^+\|_{TH^{-1/2}(\partial B_R)}+ C_1^2\epsilon_0 \rho^{\zeta_1}\|\nu\wedge E^i\|_{TH^{-1/2}_{\text{Div}}(\partial B_R)}\\
&+\frac 1 4  \|\nu\wedge E_\rho^+\|_{TH^{-1/2}(\partial B_R)}+C_1^2\rho^{\zeta_1}  \|\nu\wedge H^i\|_{TH^{-1/2}(\partial B_R)}\\
&+\frac 1 4  \|\nu\wedge E_\rho^+\|_{TH^{-1/2}(\partial B_R)}+C_1\rho^3\|\nu\wedge E^i\|_{TH^{-1/2}_{\text{Div}}(\partial B_R)}\\
&+C_1\rho^{\frac{\zeta_1}{2}}\|\widetilde J\|_{L^2(D_{1/2})^3}+C_1\rho^{\zeta_2}\|\widetilde J \|_{L^2(D\backslash D_{1/2})^3}.
\end{split}
\end{equation}
By taking $\rho_0\in\mathbb{R}_+$ to be sufficiently small such that 
$C_1\epsilon_0 \rho^{\zeta_1/2}<1/4$, then the first, third and fifth terms in the RHS of estimate 
\eqref{eq:g4} can be absorbed by the LHS, leading to 
the existence of a constant $C_2>0$ such that
\begin{eqnarray}
&&\|\nu\wedge E_\rho^+\|_{TH^{-1/2}(\partial B_R)}\nb\\
&\leq & C_2\rho^{\zeta_1}\left( \|\nu\wedge E^i\|_{TH^{-1/2}(\partial B_R)}+\|\nu\wedge H^i\|_{TH^{-1/2}(\partial B_R)} \right)
+ C_2 \rho^3\|\nu\wedge E^i\|_{TH_{\text{Div}}^{-1/2}(\partial B_R)}\nb\\
&&+C_1\rho^{\frac{\zeta_1}{2}}\|\widetilde J\|_{L^2(D_{1/2})^3}+C_1\rho^{\zeta_2}\|\widetilde J \|_{L^2(D\backslash D_{1/2})^3}. \label{eq:g5}
\end{eqnarray}
We can directly verify that $E^i$ and $H^i$ satisfies the vector-valued Helmholtz equtions
\[
\Delta E^i+\omega^2 E^i=0, \quad \Delta H^i+\omega^2 H^i=0\quad\mbox{in\ $\Omega$}\,, 
\]
then have the estimate by the interior estimates for elliptic equations that
%\footnote{Some notations in the 
%original estimates are incorrect, I have corrected them, please check if they are correct. 
%How did you get the estimates involving $C_4$ below ? {\color{blue} That's the well-known interior estimate: interior $H^1$-norm can be bounded by $L^2$-form.}}
\begin{equation}\label{eq:g6}
\begin{split}
&\|\nu\wedge E^i\|_{TH^{-1/2}_{\text{Div}}(\partial B_R)}+\|\nu\wedge H^i\|_{TH^{-1/2}_{\text{Div}}(\partial B_R)}\\
\leq & C_3\left( \|E^i\|_{H(\nabla\wedge; B_R)}+\|H^i\|_{H(\nabla\wedge; B_R)} \right)\\
\leq & C_4\left( \|E^i\|_{L^2(\Omega)}+\|H^i\|_{L^2(\Omega)} \right )\\
\leq & C_5 \|E^i\|_{H(\nabla\wedge;\Omega)},
\end{split}
\end{equation}
where $C_3, C_4$ and $C_5$ are generic positive constants depending only on $R$, $\Omega$ and $\omega$. By combining \eqref{eq:g5} and \eqref{eq:g6}, one readily has that
\begin{equation}\label{eq:g7}
\begin{split}
\|\nu\wedge E_\rho^+\|_{TH^{-1/2}(\partial B_R)}\leq C_6\bigg( & \rho^{\min(\zeta_1, 3)} \|E^i\|_{H(\nabla\wedge;\Omega)}\\
& + \rho^{\frac{\zeta_1}{2}}  \|\widetilde{J}\|_{L^2(D_{1/2})^3}+\rho^{\zeta_2}\|\widetilde J \|_{L^2(D\backslash D_{1/2})^3}  \bigg ).
\end{split}
\end{equation}
Moreover, we know by \eqref{eq:g2} that 
\begin{equation}\label{eq:g8}
\begin{split}
\|\nu\wedge H_\rho^+\|_{TH^{-1/2}(\partial B_R)}\leq C_6\epsilon_0\bigg( & \rho^{\min(\zeta_1, 3)} \|E^i\|_{H(\nabla\wedge;\Omega)}\\
& + \rho^{\frac{\zeta_1}{2}}  \|\widetilde{J}\|_{L^2(D_{1/2})^3}+\rho^{\zeta_2}\|\widetilde J \|_{L^2(D\backslash D_{1/2})^3}  \bigg ).
\end{split}
\end{equation}
Now the desired estimate \eqref{eq:main estimate virtual} follows 
directly from \eqref{eq:g7}--\eqref{eq:g8} and the following integral representation (cf.\,\cite{ColKre})
\begin{equation}
A_\infty^\rho(\hat{x})=\frac{i\omega}{4\pi}\hat{x}\wedge\int_{\partial B_R}\bigg\{\nu(y)\wedge E_\rho^+(y)+(\nu(y)\wedge H_\rho^+(y)\wedge\hat{x}\bigg\} e^{-i\omega\hat{x}\cdot y}\ ds_y\,.
\end{equation}
\end{proof}

\section*{Acknowledgements}
The research of GB was supported
in part by the NSF grants DMS-0908325, DMS-0968360,
DMS-1211292,  the ONR grant N00014-12-1-0319, a Key
Project of the Major Research Plan of NSFC (No. 91130004), and a
special research grant from Zhejiang University. The research of HL is supported by NSF grant DMS-1207784. The work of JZ was supported by Hong Kong RGC grant
(Project 405110) and the CUHK Focused Investment Scheme 2012/2014.

\end{document}